\numberwithin{equation}{subsection} 
\newtheorem{thm}{Theorem}[section]
\newtheorem{prop}[thm]{Proposition}
\newtheorem{lem}[thm]{Lemma}
\newtheorem{cor}[thm]{Corollary}
\newtheoremstyle{bidule}
{3pt}
{3pt}
{}
{}
{\scshape}
{.}
{.5em}
{}
\newtheorem{df}[thm]{Definition}
\theoremstyle{definition}
\newtheorem{rmk}{Remark}[section]
\newtheorem*{note}{Note}
\newtheorem*{warn}{Warning}
\newtheorem*{claim}{Claim}
\newtheorem{nota}{Notation}[section]
\newcommand{\C}{\mathcal{C}}
\newcommand{\Ca}{\mathcal{C}}
\newcommand{\F}{\mathcal{F}}
\newcommand{\K}{\mathcal{K}}
\newcommand{\D}{\mathcal{D}}
\newcommand{\Ba}{\mathcal{B}}
\newcommand{\Ga}{\mathcal{G}}
\newcommand{\Xa}{\mathcal{X}}
\newcommand{\M}{\mathscr{M}}
\newcommand{\V}{\mathbb{V}}
\newcommand{\J}{\mathcal{J}} 
\newcommand{\Ea}{\mathcal{E}} 
\newcommand{\Fa}{\mathcal{F}}
\newcommand{\Nv}{\mathscr{N}}
\renewcommand{\le}{\mathscr{L}}
\newcommand{\Rc}{\mathscr{R}}
\renewcommand{\P}{\mathscr{P}}
\newcommand{\Pcal}{\mathcal{P}}
\renewcommand{\to}{\longrightarrow}
\newcommand{\ol}{\overline}
\newcommand{\ul}{\underline}
\newcommand{\U}{\mathbb{U}}
\newcommand{\Ob}{\text{Ob}}
\newcommand{\n}{\textbf{n}} 
\newcommand{\m}{\textbf{m}} 
\newcommand{\q}{\textbf{q}} 
\newcommand{\p}{\textbf{p}} 
\newcommand{\0}{\textbf{0}} 
\renewcommand{\1}{\textbf{1}} 
\renewcommand{\2}{\textbf{2}} 
\newcommand{\3}{\textbf{3}} 
\newcommand{\tx}{\text}
\newcommand{\tld}{\widetilde}
\renewcommand{\to}{\longrightarrow}
\DeclareMathOperator\Id{Id}
\DeclareMathOperator\Hom{Hom}
\DeclareMathOperator\Set{\textbf{Set}} 
\DeclareMathOperator\Lax{Lax}
\DeclareMathOperator\Le{\mathcal{L}}
\DeclareMathOperator\cof{\mathbf{cof}}
\DeclareMathOperator\fib{\mathbf{fib}} 
\DeclareMathOperator\we{\mathbf{we}} 
\DeclareMathOperator\degb{\textbf{deg}} 
\DeclareMathOperator\lr{\mathbf{lr}} %
\DeclareMathOperator\oarc{\overrightarrow{\C}}
\DeclareMathOperator\oarcg{\overleftarrow{\C}}
\DeclareMathOperator\laxlatch{\mathbf{Latch}_{lax}}  
\DeclareMathOperator\colaxmatch{\mathbf{Match}_{colax}}  
\DeclareMathOperator\latch{\mathbf{Latch}}  
\DeclareMathOperator\colaxlatch{\mathbf{Latch}_{colax}}  
\DeclareMathOperator\Colax{Colax}
\newcommand{\ns}{\ul{n}} 
\newcommand{\ms}{\ul{m}} 
\newcommand{\px}{\P_{\ol{X}}} 
\title{Colax Reedy diagrams}
\author{Hugo V. Bacard}
\affil{Western University}
\date{}
\begin{document}
\maketitle

\begin{abstract}
We  use a theory of colax Reedy diagrams to show that the category of Segal $\M$-precategories  with fixed set of objects has a model structure for a symmetric monoidal model category $\M=(\ul{M},\otimes,I)$. What is relevant here is when $\M$ is monoidal for a non-cartesian product. The model structure is of Reedy style and generalizes the Reedy model structure for classical Segal $\M$-precategories when $\M$ is monoidal for the cartesian product. The techniques we use also generalize the  Reedy model structure for classical Reedy diagrams.
\end{abstract}
\setcounter{tocdepth}{1}
\tableofcontents

\section{Introduction}
This paper is a first step toward the existence of model structure for Segal enriched categories when the base of enrichment $\M=(\ul{M},\otimes,I)$ is monoidal for a noncartesian product. The motivating examples are what we should call Segal DG-categories, which are Segal like enriched categories over chain complexes. To define such (weakly) enriched categories one uses the so called colax (or oplax) diagrams. Their definition is a generalization of the notion of \emph{up-to-homotopy monoid} introduced by Leinster \cite{Lei3}.\\

Our interest in studying these structures was motivated in part by a project of Toën \cite{Toen_Tannaka} who imagined a theory of higher linear categories that would be used in his program of \emph{higher tannakian duality}. Another motivation come from Kock and Toën \cite{Kock-Toen} who outlined that a good homotopy theory of Segal linear categories could bring a conceptual proof of the Deligne conjecture. That direction has been considered by Shoikhet \cite{Shoikhet_Deligne}. \\

Unfortunately we don't have a good homotopy theory of these weakly enriched categories; even in the one-object case which corresponds to Leinster's monoids. By `good homotopy' we mean a model structure on all colax diagrams (= Segal precategories) such that fibrant objects satisfy the Segal conditions.\\ 
On top of that, having a nice model structure on Segal precategories doesn't seem to be straightforward. And in this paper we only give a little step ahead with the existence of a model structure of unital Segal precategories when we fix the set of objects. \\ 

The major problem comes from the Segal maps e.g  $\C(A,B,C) \to \C(A,B) \otimes \C(B,C)$, in which the tensor product $\otimes$ is on the `wrong side'. And because of this many operations that exists naturally when $\otimes =\times$ are no longer immediate. For example computing limits in the categories of Segal precategories is a bit technical (see Section \ref{limit_colax}). 

We use a language of \emph{locally Reedy $2$-category} to get most of the results in this paper. And this is not simply a `general nonsense' approach, because working in these settings covers many situations. For example \emph{Leinster's $n$-algebras} considered by  
Shoikhet \cite{Shoikhet_Deligne} are special case of colax diagrams indexed by locally Reedy $2$-categories. 

 Below we outline very briefly the content of the paper and some of the missing tools for having a homotopy theory.

\subsection{What is done here} 
We show that if we fixe a set $X$, then the category $\Pcal\C(X,\M)$ of \textbf{unital Segal precategories} has a Reedy style model structure (Theorem \ref{model-for-unital}). We give this result in a general context of normal colax diagrams indexed by a strict $2$-category $\C$ which is locally Reedy and \emph{direct-divisible} (Definition \ref{direct-divisible}). The later property gives us a control on diagrams indexed by such $2$-categories. In particular it gives as a canonical map from the \emph{colax latching space} to the \emph{colax matching space} for a truncated diagram. This map is natural and obvious for classical Reedy $1$-diagrams but in the colax situation it's no longer guaranteed for general locally Reedy $2$-categories. In addition to that being direct-divisible allows constructing inductively colax diagrams.

\subsection{What is missing}
As $X$ runs through the category $\Set$ of sets, we have a canonical fibred category of all unital Segal precategories: 
$$p: \Pcal\C(\M) \to \Set$$
where the fiber of $X \in \Set$ is of course $\Pcal\C(X,\M)$.  The major question is to determine what kind of \emph{reasonable homotopy theory} we can put on $p$. The fibred category $p$ is a fibration in monoidal categories in the sense that each fiber carries a monoidal structure. But it's also a \emph{monoidal fibration} in the sense of Shulman \cite{Shulman_monoidal_fib} with the appropriate tensor product (which can be found in \cite{SEC1}).\\

The following facts are not known yet.
\begin{enumerate}
\item We don't know for the moment if $\Pcal\C(\M)$ is complete and cocomplete even if each fiber is complete and cocomplete. In fact pushing forward a colax diagram $\F \in \Colax(\C,\M)$ along a strict $2$-functor $\gamma : \C \to \D$ seems to be complicate and pulling back colax diagrams doesn't preserve necessarily limits.
\item We don't know either if there exists a \emph{Segalification functor} that takes a colax diagram to another that satisfies the Segal conditions.
\item The absence of the previous functor makes it hard to determine what kind of \emph{reasonable weak equivalences} we can have on $\Pcal\C(\M)$.
\end{enumerate}

\section{Preliminaries}

Let $\M$ be a biclosed $2$-category which is locally complete and cocomplete. By colax diagram in $\M$ we mean a colax morphism of $2$-categories $\F: \C \to \M$ where $\C$ is a strict $2$-category. We will denote by $\Colax(\C,\M)$ the category of colax morphisms and transformations which are icons in the sense of Lack \cite{Lack_icons}. Similarly a lax diagram in $\M$ is a lax morphism indexed by a strict $2$-category $\C$; we have a category $\Lax(\C,\M)$ of lax morphisms and icons.
\begin{nota}
We will denote by:
\begin{itemize}[label=$-$]
\item $\Colax(\C,\M)_n$ the full subcategory $\subset \Colax(\C,\M)$, of \emph{normal colax functors}. These are colax functors $\Fa$ such that the maps `$\Fa(\Id) \to \Id$' are identities and all the colaxity maps $ \Fa( \Id \otimes f) \to \Fa(\Id) \otimes \Fa(f)$ are natural isomorphisms.
\item $\Lax(\C,\M)_n$ the subcategory  $\subset \Lax(\C,\M)$ of \emph{normal lax functors}.
\end{itemize} 
\end{nota}

\subsection{Lax to Colax and vice versa} An easy exercise shows that we have an isomorphism of $1$-categories:
$$ \Colax(\C,\M) \xrightarrow{\cong} \{\Lax(\C^{2\tx{-op}},\M^{2\tx{-op}})\}^{op}$$ where 
the superscript `$2$-op' represents the $2$-opposite construction: we keep the same $1$-morphisms and reverse the $2$-morphisms. The isomorphism takes a colax morphism given by $$\F=\{ \F_{xy}: \C(x,y) \to \M(\F x,\F y); \varphi: \F(f\otimes g) \to \F(f) \otimes \F(g) \}$$ to the lax morphism given by $$\F^{2\tx{-op}}=\{ \F_{xy}^{op}: \C(x,y)^{op} \to \M(\F x,\F y)^{op}; \varphi^{op}: \F(f) \otimes \F(g) \to  \F(f\otimes g) \}.$$

\section{Colax Reedy diagrams} A \emph{colax Reedy diagram} is an object of $\Colax(\C,\M)_n$, that is a normal colax morphism
$$\F: \C \to \M$$
where $\C$ is a locally Reedy $2$-category in the sense of  \cite[Def 6.1]{COSEC1}.  From now $\C$ will be a locally Reedy $2$-category (henceforth $\lr$-category) which is \emph{simple} in the sense of \cite[Def 6.4]{COSEC1}. Here `simple' means that we have a global linear extension $\degb$ for $1$-morphisms such that $\degb(f \otimes g)= \degb(f)+\degb(g)$. \ \\

Note that if $\C$ is an $\lr$-category then so is $\C^{2\tx{-op}}$;  and if moreover $\C$ is simple then so is $\C^{2\tx{-op}}$. Given a $2$-morphism $z$ in some $\C(A,B)$ we introduced the  notion of lax-latching category at $z$ (\cite[Def 6.1]{COSEC1});  and of lax-latching object $\laxlatch(\F,z)$ for a lax diagram $\F$ (\cite[Def 6.10]{COSEC1}). 

\begin{df}
Let $\F: \C \to \M$ be a colax Reedy diagram in $\M$ and $z$ an $1$-morphism of $\C$ in some $\C(A,B)$. 
\begin{enumerate}
\item Define the \textbf{colax-matching category} at $z$, denoted $\partial^{\bullet}_{z/ \C}$, to be \textbf{the opposite category of the lax-latching category} of $\C^{2\tx{-op}}$ at $z$.
\item Define the \textbf{colax-matching object} of $\F$ at $z$ to be the lax-latching object of $\F^{2\tx{-op}}$ at $z$ i.e
$$\colaxmatch(\F,z):= \laxlatch(\F^{2\tx{-op}},z).$$
\item Define the \textbf{colax-latching category} at $z$ to be the usual latching category at $z$ of the Reedy $1$-category $\C(A,B)$ at $z$.
\item Define the \textbf{colax-latching object} of $\F$ at $z$ to be  the classical latching object of $\F_{AB}$  at $z$:
$$\colaxlatch(\F,z):= \latch(\F_{AB},z).$$
\end{enumerate}
\end{df}
We give below examples of morphisms in the colax-matching category. Let $z$ be a $1$-morphism of $\C$ and $(x_1,x_2)$ be a pair of composable $1$-morphisms such that $\otimes(x_1,x_2)= z$. Let $(y_1,y_2,y_3)$ be a triple of composable $1$-morphisms. Assume furthermore that we have two inverse $2$-morphisms $u_1: x_1 \to y_1$ and $u_2: x_2 \to y_2 \otimes y_3$. Then 

\begin{itemize}[label=$-$]
\item the maps $\beta_1 = \Id_z: z \to  \otimes(x_1,x_2)$ and $\beta_2= u_1 \otimes u_2:  z \to \otimes(y_1,y_2,y_3)$ are two objects of $\partial^{\bullet}_{z/ \C}$

\item the following diagram represents a morphism 
$u:\beta_1 \to \beta_2$ in the colax-matching category:

\[
\xy
(0,15)*+{(x_1,x_2)}="A";
(40,0)*+{(y_1,y_2,y_3)}="B";
(0,0)*+{(y_1,y_2 \otimes y_3)}="C";
{\ar@{.>}^-{u}"A";"B"};
{\ar@{->}^-{(u_1,u_2)}"A";"C"};
{\ar@{->}_-{\tx{co-composition}}"C";"B"};
\endxy
\]
\end{itemize}
Here ``co-composition'' represents the opposite (morphism of) composition.
\begin{rmk}\ \
\begin{enumerate}
\item The lax-latching object $\laxlatch(\F^{2\tx{-op}},z)$ is computed as colimit in $\M(\F A,\F B)^{op}$ which means that it's a limit in $\M(\F A,\F B)$. 
\item On can easily check that we  have two universal maps: 
$$\F z \to \colaxmatch(\F,z) \ \ \ \ \  \tx{and} \ \ \ \ \  \colaxlatch(\F,z) \to \F z$$ the second map being the usual morphism for the Reedy diagram $\F_{AB}$. Their composite gives a unique map 
$$i_z:\colaxlatch(\F,z) \to \colaxmatch(\F,z).$$
\item As pointed out in \cite[sec 6.1]{COSEC1} any classical Reedy $1$-category  $\Ba$ can be considered as a simple locally Reedy $2$-category (denoted $\Ba_{0 \to 1}$). Normal lax functor $\F: \Ba_{0 \to 1} \to \M$ are the same thing as normal colax functor  $\F: \Ba_{0 \to 1} \to \M$ and both of them are equivalent to $1$-functors from \footnote{Actually the cateogory of functor from $\Ba$ to $\ul{M}$} $\Ba$ to $\M$. We leave the reader to check that the objects defined in the definition coincide with the classical ones for $\Ba$ and $z \in \Ba$.
\end{enumerate}
\end{rmk}

\subsection{Some restrictions: direct-divisibility}
 It's important to notice that in the absence of $\F z$, there is no reason to have by universal property the previous morphism $i_z: \colaxlatch(\F,z) \to \colaxmatch(\F,z)$! Indeed $\colaxlatch(\F,z)$ depends only on the values of $\F_{AB}$ while $\colaxmatch(\F,z)$ depends also on the values of $\F(s) \otimes \F(t)$ for all $s,t$ with  an inverse morphism $z \to s \otimes t$. 

In particular if we consider a notion of truncation $\F^{\leq m}$ of $\F$ and if $z$ is of degree $m+1$ then unlike the classical case, we cannot produce a map $i_z:\colaxlatch(\F,z) \to \colaxmatch(\F,z)$. And this map is needed in the inductive method that provides the factorization axiom in the Reedy model structure (see \cite{Hov-model}, \cite{Jardine-Goerss}, \cite{Hirsch-model-loc}, \cite{Lurie_HTT}).

As we wish to use the same method for classical Reedy diagrams we need to guarantee the existence of that map. This leads us to some restrictions which happens to cover our known cases. The restriction is on the indexing $2$-category $\C$. 

\begin{nota}
Let $\C$ be a locally Reedy $2$-category which is simple. 
\begin{itemize}[label=$-$]
\item We will denote by $\oarc$ the $2$-subcategory of $\C$ consisting of all direct $2$-morphisms: $\Ob(\oarc)= \Ob(\C)$ and $\oarc(A,B):= \overrightarrow{\C(A,B)}$.
\item Similarly we will denote by $\oarcg$ the $2$-subcategory of $\C$ consisting of all inverse $2$-morphisms: $\oarcg(A,B):= \overleftarrow{\C(A,B)}$.
\end{itemize}
\end{nota}

\begin{df}\label{direct-divisible}
Let $\C$ be a simple locally Reedy $2$-category. Say that $\C$ is \textbf{direct-divisible} if for every triple $(A,B,C)$ of objects of $\C$ the composition functor on $\oarc$ 
$$c: \oarc(A,B) \times \oarc(B,C) \to \oarc(A,C)$$
is a Grothendieck fibration. 
\end{df}

The definition says that for any \ul{direct} $2$-morphism $\alpha: z \to z'$ of $\C$ in $\C(A,C)$ then for any $(s',t') \in \C(A,B) \times \C(B,C)$ such that $s' \otimes  t'=z'$; there exists  a \textbf{unique} pair $(s,t)$ such that $s \otimes t= z$ and two  unique direct maps $\beta_1: s \to s', \beta_2: t \to t'$ such that $\beta_1 \otimes \beta_2= \alpha$. The uniqueness comes from two facts: that in a classical Reedy category the only isomorphisms are identities;  and that $\C$ is simple.\ \\

Examples of such $2$-categories  include $(\Delta^+,+,\0)$, $\Ba_{0 \to 1}$, $\P_{\D}, \P_{\ol{X}}$ and their respective $1$-opposite. Indeed for $(\Delta^+,+,\0)$, one has that for every monomorphism $f: \n \to \m$ and for every $\p, \q$ such that we have a side-by-side decomposition $\p + \q= \m$; taking the inverse image of each side gives two monomorphisms $f_1: \n_1 \to \p$ and $f_2: \n_2 \to q$ such that $f_1 + f_2= f$. 

\subsection{Truncation of (colax) diagrams} 

\paragraph{$2$-groupement or almost-$2$-category} In \cite{COSEC1} we consider a notion of \emph{$2$-groupement} following Bonin \cite{Bonnin}. The idea is that when we consider a simple $\lr$-category $\C$ with the same objects but only $1$-morphisms of degree $\leq m$, we no longer have a $2$-category but an `almost $2$-category'; in the sense that the composition of $f$ and $g$ is defined only if $\degb(f) + \degb(g) \leq m$ (see \cite{COSEC1} for  details). So morally a $2$-groupement is a sort of $2$-category where the composition is not automatic but is subjected to a condition. \\
\ \\
From now we will denote by $\C^{\leq m}$ the $2$-groupement (or almost-$2$-category) given by:
\begin{itemize}[label=$-$]
\item the same objects as $\C$;
\item $\C^{\leq m}(A,B):= \C(A,B)^{\leq m}$ the full subcategory  of $\C(A,B)$ of $1$-morphisms of degree $\leq m$;
\item The composition is partially defined but is associative. 
\end{itemize} 
We have a corresponding notion of  (co)lax morphisms between $2$-groupement and transformations. Furthermore many notions that exist for $\C$ restricts naturally to $\C^{\leq m}$. \ \\

A  \textbf{truncated colax diagram} is a colax morphism $\Ga: \C^{\leq m} \to \M$ for some $\lr$-category $\C$ and an ordinal $m$. In particular any colax diagram $\F: \C \to \M$ induces a truncated diagram $\F^{\leq m}: \C^{\leq m} \to \M$ for all $m$.\ \\

One of the advantages of having a direct divisible $\lr$-category $\C$ is that we can establish the:

\begin{lem}\label{latch-match}
Let $\C$ be a locally Reedy $2$-category which is simple and direct-divisible; and $z$ be a $1$-morphism of degree $m$. Then for any colax diagram $\F: \C^{\leq m-1} \to \M$ there is a canonical map:
$i_z:\colaxlatch(\F,z) \to \colaxmatch(\F,z)$.
\end{lem}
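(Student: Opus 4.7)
The plan is to build $i_z$ from the universal properties of the colimit $\colaxlatch(\F,z)$ and the limit $\colaxmatch(\F,z)$: for each object $\alpha:y\to z$ of the colax-latching category and each object $(s_1,\dots,s_k;\,u:z\to s_1\otimes\cdots\otimes s_k)$ of the colax-matching category, I produce a morphism $\psi_{\alpha,u}:\F(y)\to\F(s_1)\otimes\cdots\otimes\F(s_k)$ in $\M(\F A,\F B)$, check naturality in both variables, and then assemble.

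To produce $\psi_{\alpha,u}$ I Reedy-factor the composite $2$-morphism $u\alpha:y\to s_1\otimes\cdots\otimes s_k$ inside the Reedy $1$-category $\C(A,B)$ as
\[
y\xrightarrow{u'} y'\xrightarrow{\alpha'}s_1\otimes\cdots\otimes s_k
\]
with $u'$ inverse and $\alpha'$ direct. Since $\alpha$ is non-identity direct and $u$ is either non-identity inverse or the identity of a non-trivial decomposition of $z$, the middle term satisfies $\degb(y')\leq\min(\degb(y),\degb(s_1\otimes\cdots\otimes s_k))\leq m-1$, so $\F(y')$ and $\F(u')$ are defined. Applying direct-divisibility iteratively to $\alpha'$ along the intermediate objects supporting $(s_1,\dots,s_k)$ yields a unique decomposition $y'=t_1\otimes\cdots\otimes t_k$ and unique direct $2$-cells $\beta_i:t_i\to s_i$ with $\beta_1\otimes\cdots\otimes\beta_k=\alpha'$. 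Since $\degb(t_i)\leq\degb(s_i)<m$, the values $\F(t_i)$ and the colaxity constraint $\varphi_{t_\bullet}:\F(t_1\otimes\cdots\otimes t_k)\to\F(t_1)\otimes\cdots\otimes\F(t_k)$ all live in $\F:\C^{\leq m-1}\to\M$. Set
\[
\psi_{\alpha,u}=\bigl(\F(\beta_1)\otimes\cdots\otimes\F(\beta_k)\bigr)\circ\varphi_{t_\bullet}\circ\F(u').
\]
The degenerate case $u=\Id_z$ (where $s_1\otimes\cdots\otimes s_k=z$ has degree $m$) is included: the Reedy factorization collapses to $(\Id_y,y,\alpha)$, and direct-divisibility is applied to $\alpha$ itself, which sidesteps the missing value $\F(z)$ and uses only $\F(t_i)$ with $\degb(t_i)<m$.

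Naturality in $\alpha$ follows from functoriality of Reedy factorization together with the uniqueness clause of direct-divisibility: a direct $\gamma:y_1\to y_2$ with $\alpha_2\gamma=\alpha_1$ transports the factorization of $u\alpha_2$ to that of $u\alpha_1$ by precomposing $u'$ with $\gamma$, while uniqueness pins down the $\beta_i$ and naturality of $\varphi$ finishes the check. Naturality in $u$ is the more delicate verification: a typical morphism in the colax-matching category refines a decomposition by inserting an inverse $2$-cell that splits one factor into several, and compatibility with $\psi_{\alpha,u}$ relies on the associativity (coherence) axiom of the normal colax functor $\F$, again using the uniqueness in direct-divisibility to identify the two induced decompositions of $y'$. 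Given these naturalities, the universal property of $\colim$ in $\alpha$ and then of $\lim$ in $u$ assembles $\{\psi_{\alpha,u}\}$ into the required map $i_z$.

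The step I expect to be the main obstacle is precisely naturality in $u$: morphisms in the colax-matching category entangle a change of decomposition of $z$ with inverse $2$-cells on each factor, so the compatibility check must interleave colax coherence with the direct-divisibility liftings. It is exactly for this interleaving that direct-divisibility (i.e., the Grothendieck fibration condition, with its unique-lifting consequence) is the right hypothesis; a weaker pre-fibration would allow multiple choices of the $t_i$ and break the canonicity of $i_z$.
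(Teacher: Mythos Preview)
Your approach is essentially the paper's, with one presentational difference: you treat the two cases (non-identity $u$ versus $u=\Id_z$) uniformly by first Reedy-factoring the composite $u\alpha$, whereas the paper handles them separately --- in the non-identity case it simply takes $\varphi\circ\F(u\alpha)$ with no recourse to direct-divisibility, reserving the fibration hypothesis for the identity case only. Your formula and the paper's agree by naturality of the colaxity constraints, so this is a repackaging rather than a genuinely different argument. The paper's case-split makes the compatibility check (your ``naturality in $u$'') somewhat more transparent, since in the non-identity case the map is visibly a component of the functor that $\F$ induces on the colax-matching category, and only the passage through the identity decomposition requires the lifting uniqueness; conversely, your uniform formula avoids having to verify separately that the two case-wise definitions match along the boundary.

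One small imprecision worth flagging: in your naturality-in-$\alpha$ sketch, ``precomposing $u'$ with $\gamma$'' does not directly give the Reedy factorization of $u\alpha_1$, since $u_2'\gamma$ is a composite of an inverse and a direct map and need not be inverse. What actually happens is that one Reedy-factors $u_2'\gamma=\delta\,\epsilon$ (inverse then direct), so that $u_1'=\epsilon$ and $\alpha_1'=\alpha_2'\delta$; the uniqueness clause of direct-divisibility then identifies the lift of $\alpha_1'$ with the composite of $\delta$'s lift and the lift of $\alpha_2'$, and naturality of $\varphi$ closes the square. This is routine to fix and does not affect the overall correctness.
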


\subsection*{Proof of Lemma \ref{latch-match}}
We outline the general idea of the proof leaving the details to the reader.\ \\

Since $\colaxlatch(\F,z)$ is a colimit, the map $i_z$ will follow by universal property of the colimit  if we show that for any direct $2$-morphism 
$\alpha: s \to z$ with $\degb(s)\leq m-1$, we have a map $i_z(s): \F s \to \colaxmatch(\F,z)$ which is functorial in $\alpha$. \\ 

Recall that  $\colaxmatch(\F,z)$ is the limit of $\otimes( \F x_1, ..., \F x_n)$  where the limit is taking over an appropriate category of inverse $2$-morphisms  $\beta: z \to  \otimes(x_i)$. It follows that the map 
$$i_z(s): \F s \to \colaxmatch(\F,z)$$ 
will be induced by universal property of the limit if we show that we have a compatible diagram $\F s \to  \otimes(\F x_i)$ for any inverse $2$-morphism $\beta: z \to  \otimes(x_i)$ in the colax-matching category of $z$. 
Below we outline how we get the map $\F s \to  \otimes( \F x_1, ..., \F x_n)$.
\begin{enumerate}
\item If $\beta: z \to  \otimes(x_i)$ is not the identity, then $\degb(x_1)+ ... + \degb(x_n) < m$, and $\otimes(x_1,...,x_n)$ is a $1$-morphism of $\C^{\leq m-1}$. So given $\alpha: s \to z$ and such $\beta$, the composite 
$\beta \circ \alpha: s \to \otimes(x_1,...,x_n)$ is a $2$-morphism of $\C^{\leq m-1}$ where $\F$ is defined; and we get a morphism: 
$$\F(\beta \circ \alpha): \F s \to \F [\otimes(x_1,...,x_n)].$$ 

Now using the colaxity maps of $\F$ (and the coherence) we have a map:
$$\varphi: \F [\otimes(x_1,...,x_n)] \to \otimes( \F x_1, ..., \F x_n)$$

The composite of the two previous maps gives:
$$\F s \xrightarrow{\varphi \circ \F(\beta \circ \alpha)} \otimes( \F x_1, ..., \F x_n).$$
\item If $\beta$ is the identity i.e $\otimes(x_1,...,x_n)=z$ this is where we use the fact that the composition in $\oarc$  is a Grothendieck fibration (the \textbf{direct-divisibility}). As $\alpha : s \to z$ is a direct $2$-morphism and since the composition of direct $2$-morphisms is a fibration then for any such $(x_1,...,x_n)$ we can find a unique $n$-tuple of direct $2$-morphisms $\alpha_i: s_i \to x_i$ such that: 
\begin{itemize}[label=$-$]
\item $\otimes(s_1,...,s_n)=s $ and 
\item  $\otimes( \alpha_1, ..., \alpha_n)= \alpha$.  
\end{itemize}
We define  the map $\F s \to \otimes( \F x_1, ..., \F x_n)$ to be the composite:
$$\F s \xrightarrow{\varphi} \otimes( \F s_1, ..., \F s_n) \xrightarrow{\otimes(\F \alpha_i)} \otimes( \F x_1, ..., \F x_n). $$
\end{enumerate}

\paragraph{Compatibility} It remains to show that these data are compatible, in the sense that if we have $\beta_1: z \to \otimes(x_1,...,x_n)$, $\beta_2: z \to \otimes(y_1,...,y_p)$ with a map $u: \beta_1 \to \beta_2$ in the colax-matching category at $z$ (whence $n\leq p$); then we must have a commutative diagram:
\[
\xy
(0,15)*+{\F s}="A";
(40,15)*+{\otimes( \F x_1, ..., \F x_n)}="B";
(0,0)*+{\otimes( \F y_1, ..., \F y_p)}="C";
{\ar@{->}^-{}"A";"B"};
{\ar@{->}^-{}"A";"C"};
{\ar@{->}^-{\F u}"B";"C"};
\endxy
\]

First of all if $\beta_1: z \to \otimes(x_1,...,x_n)$ is not the identity, i.e $\degb(x_1)+ ... + \degb(x_n) < m$, then the commutativity is given by the dual statement of \cite[Prop. 6.9]{COSEC1} for colax diagrams. That proposition says that any truncated colax diagram $\F: \C^{\leq m} \to \M$ induces a functor on the colax-matching category at $z$ for every $z$ of degree $m+1$.\ \\

So we can assume that $\otimes(x_1,...,x_n)= z$. We consider the two cases: when $p>n$ and when $p=n$. 

\paragraph{\ul{Case 1: $p=n$}} In this case the map $u$ is given by an $n$-tuple of inverse $2$-morphisms i.e $u=(u_1,..., u_n)$ with $u_i: x_i \to y_i$;  moreover we have that $\otimes(u_1, ..., u_n)= \beta_2 $ by definition of the colax-matching category at $z$. We can assume that $u$ is not the identity (otherwise it's trivial); it follows that $\beta_2$ is not the identity because the composition in a simple $\lr$-category is identity reflecting\footnote{If $\beta_2$ is the identity, by reflection and induction all the $u_i$ are identities which contradicts the assumption ``$u$ is not the identity''}.  Since  $\beta_2$ is not the identity, we have by definition, that the map $\F s \to \otimes( \F y_1, ..., \F y_n)$ is the composite:
$$\F s \xrightarrow{\F(\beta_2 \circ \alpha)}  \F[\otimes(y_1,...,y_n)]  \xrightarrow{\varphi}\otimes( \F y_1, ..., \F y_n).$$  

Now since  $\alpha= \otimes(\alpha_i)$ and $\beta_2= \otimes (u_i)$ we have: 
$$ \beta_2 \circ \alpha = \otimes (u_i) \circ \otimes(\alpha_i)= \otimes( u_i \circ \alpha_i).$$

Using the functoriality of the coherence of $\F$ we have a diagram where everything commutes:
\[
\xy
(0,20)*+{\F s}="A";
(40,20)*+{\otimes( \F s_1, ..., \F s_n)}="B";
(0,0)*+{\F[\otimes(y_i)]}="C";
(80,10)*+{\otimes( \F x_1, ..., \F x_n)}="M";
(40,0)*+{\otimes( \F y_1, ..., \F y_n)}="D";
{\ar@{->}^-{\varphi}"A";"B"};
{\ar@{->}_-{\F(\beta_2 \circ \alpha)=\F[\otimes(u_i \circ \alpha_i)]}"A";"C"};
{\ar@{->}^-{\otimes(\F \alpha_i)}"B";"M"};
{\ar@{->}^-{\otimes(\F u_i)}"M";"D"};
{\ar@{->}_-{\otimes[\F (u_i \circ \alpha_i)]}"B";"D"};
{\ar@{->}^-{\varphi}"C";"D"};
\endxy
\]  

That diagram gives us the required compatibility. 

\paragraph{\ul{Case 2: $p>n$}} For this case, an analysis of the colax-matching category at $z$ tells us that the morphism $u$ is given  by:

$$ (x_1,..., x_n) \xrightarrow{(u_1',...,u'_n)}  (y_1',..., y_n')  \xrightarrow[\sigma^{op}]{\tx{some co-compositions}}(y_1,...,y_p).$$

Here $y_k'$ is a composite of some the $y_i$'s  i.e  $y_k'= \otimes(y_l,.., y_{l+j})$; and the maps $u_k': x_k \to y_k'$ are inverse maps. The map $\sigma$ is governed by a map of $\Delta^+$ which is surjective. Below we show an example of such a map $u$ for $n=2$ and $p=3$:
\[
\xy
(0,20)*+{(x_1,x_2)}="A";
(40,0)*+{(y_1,y_2,y_3)}="B";
(0,0)*+{\underbrace{(y_1',y_2')}_{=(y_1,y_2 \otimes y_3)}}="C";
{\ar@{.>}^-{u}"A";"B"};
{\ar@{->}^-{(u_1',u_2')}"A";"C"};
{\ar@{->}_-{\sigma^{op}}"C";"B"};
\endxy
\]

In the above example the map $\sigma$ is governed by the map 
$\sigma_3^2: \3 \to \2$ of $\Delta^+$ given by $\sigma_3^2(1)= 1$ and $\sigma_3^2(2)= \sigma_3^2(3)=2$. \ \\

By the previous case ($p=n$) we know that the following commutes:  
\[
\xy
(0,15)*+{\F s}="A";
(40,15)*+{\otimes( \F x_1, ..., \F x_n)}="B";
(0,0)*+{\otimes( \F y_1', ..., \F y_n')}="C";
{\ar@{->}^-{}"A";"B"};
{\ar@{->}^-{}"A";"C"};
{\ar@{->}^-{\F u'}"B";"C"};
\endxy
\]
thus our original diagram will be commutative if we show that the following one is also commutative:
\[
\xy
(0,15)*+{\F s}="A";
(40,15)*+{\otimes(  \F y_1', ..., \F y_n')}="B";
(0,0)*+{\otimes(\F y_1, ..., \F y_p)}="C";
{\ar@{->}^-{}"A";"B"};
{\ar@{->}^-{}"A";"C"};
{\ar@{->}^-{\F \sigma}"B";"C"};
\endxy
\]

So we are reduced to the case where $u=\sigma^{op}$ is  a map that composes some of the $x_i$'s. As mentioned above such a map $\sigma$ is governed by a map of $\Delta^+$ which is surjective (the matching category is a subcategory of the Grothendieck integral of a functor defined over $\Delta^+$). By a theorem of Mac Lane \cite{Mac}  we know that all surjective maps of $\Delta^+$ are generated by the cofaces $\sigma^i$ in the sense that any surjective map $\sigma$ can be written as a composite of some $\sigma^i$. From that observation it's not hard to see that any such map $\sigma:  (y_1',..., y'_n)\to  (y_1,..., y_p)$  in the colax category can be written as a composite of maps governed by the cofaces $\sigma^i$ (by definition of the Grothendieck construction). 
\ \\

But since $\F$ defines a functor on the colax-matching category (thanks to the coherence conditions), it's enough to show that we have a commutative diagram when $\sigma$ is governed by single map $\sigma^i$ (whence $p= n+1$). Such a map $\sigma:  (y_1',..., y'_n)\to  (y_1,..., y_{n+1})$ consists of  composing $y_i$ and $y_{i+1}$ ($0 \leq i \leq n-1$). We remind the reader that the map $\F \sigma$ is essentially given by the colaxity map $\varphi: \F(y_i \otimes y_{i+1}) \to \F y_i \otimes \F y_{i+1}$:
$$ \otimes(  \F y_1', ..., \F y_n') \xrightarrow{\otimes (\Id, ..., \varphi, ..., \Id)} \otimes(\F y_1, ..., \F y_{n+1}) .$$ 
After these reductions, we simply have to observe that if we have an $n$-tuple of directs maps $(\alpha_l : s_l \to y_l')_{(1 \leq l \leq n)}$ such that
 $\alpha= \otimes(\alpha_l)$; then if we apply the \textbf{direct-divisibility condition} to the map $\alpha_i: s_i \to (y_i\otimes y_{i+1})$ we can find two (unique) direct maps $\alpha_i': s_i' \to y_i$ and $\alpha_{i+1}': s_{i+1}' \to y_{i+1}$ such that $\alpha_i' \otimes \alpha_{i+1}'= \alpha_i$. On the one hand  we have, using the functoriality of the coherences for $\F$, a commutative diagram:
\[
\xy
(0,20)*+{\F y_i'=\F (y_i \otimes y_{i+1})}="A";
(40,20)*+{\F y_i \otimes \F y_{i+1}}="B";
(0,0)*+{\F s_i}="C";
(40,0)*+{\F s_i' \otimes \F s_{i+1}'}="D";
{\ar@{->}^-{\varphi}"A";"B"};
{\ar@{->}_-{\F \alpha_i }"C";"A"};
{\ar@{->}_-{\F \alpha_i' \otimes \F \alpha_{i+1}'}"D";"B"};
{\ar@{->}^-{\varphi}"C";"D"};
\endxy
\]  

One the other hand the direct-divisibility implies that the $(n+1)$-tuple of morphisms 
$$(\alpha_1,..., \alpha_{i}', \alpha_{i+1}',..., \alpha_l,...)$$
\textbf{is the one} we used to define the map 
$\F s \to \otimes (\F y_1, ..., \F y_{n+1})$. Now if we combine the two diagrams we get  the following commutative square:
\[
\xy
(0,20)*+{\otimes(  \F y_1', ..., \F y_n')}="A";
(60,20)*+{\otimes(\F y_1, ..., \F y_{n+1})}="B";
(0,0)*+{\otimes(\F s_1, ..., \F s_n)}="C";
(60,0)*+{\otimes(\F s_1, ..., \F s_i', \F s_{i+1}',... \F s_n)}="D";
{\ar@{->}^-{\sigma}"A";"B"};
{\ar@{->}_-{\otimes(\F \alpha_1, ..., \F \alpha_n)}"C";"A"};
{\ar@{->}_-{\otimes(\F \alpha_1, ...,\F \alpha_i', \F \alpha_{i+1}',... \F \alpha_n)}"D";"B"};
{\ar@{->}^-{\otimes (\Id, ..., \varphi, ..., \Id)}"C";"D"};
\endxy
\]  

We get our desired commutative diagram by precomposing with the colaxity map $\varphi : \F s \to \otimes(\F s_1, ..., \F s_n)$; this shows that we have a compatible diagram.\ \\

Therefore by universal property of the limit we have a unique map $i_z(s): \F s \to \colaxmatch(\F,z)$ that makes everything compatible.\ \\
\ \\
The fact that the map $i_z(s)$ is functorial in $\alpha: s \to z$ is tedious but straightforward. One has to use the fact that the direct-divisibility condition says that we have  fibrations between Reedy categories; and since in Reedy categories there are no non-trivial isomorphisms, then the inverse image functor commutes genuinely with the composition. Consequently the cartesian lifting of a composite is \ul{the} composite of the liftings. We leave the details to the reader.\ \\ 

Summing up our discussion we get our unique map 
$$i_z:\colaxlatch(\F,z) \to \colaxmatch(\F,z)$$ by universal property of the colimit; and the lemma follows.
\qed

\section{Colax diagram and simplicial objects}
\subsection*{Warning: $\Delta$ and $\Delta^+$}
Below we mention two different categories ``\textbf{Delta}''. We include this short paragraph to warn the reader about the potential confusion. We outline very briefly some known facts about these two ``Delta''.
\begin{itemize}[label=$-$]
\item $\Delta$ is the category of finite ordinals $\ns= \{0,..., n \}$,  \textbf{without the empty set}. The morphisms are the nondecreasing functions.
\item $\Delta^+$ is the category of all finite ordinals $\n= \{0,..., n-1\}$, \textbf{with the empty set} ($=\0$). The morphisms are also the nondecreasing functions. 
\end{itemize}

\subsubsection*{From $\Delta$ to $\Delta^+$}
If $\ns= \{0,..., n \}$ and $\ul{m}= \{0,..., m \}$  are two objects of $\Delta$, say that $f : \ns \to \ul{m}$ \emph{preserves the extremities} if:
$$f(0)= 0 \ \ \ \tx{and} \ \ \ f(n)=m. $$ 

Let $\Omega \subset \Delta$ be the subcategory having the same objects as $\Delta$ and whose morphisms are the ones that preserve the extremities.  Then we claim that:
\begin{claim}
There is an isomorphism of categories between $\Omega^{op}$ and $\Delta^+$.
\end{claim}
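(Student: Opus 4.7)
The plan is to construct the isomorphism explicitly, in the spirit of Joyal's interval duality. On objects, I would set $\Phi(\ns) = \n$, sending $\ns = \{0, \ldots, n\} \in \Omega$ to $\n = \{0, \ldots, n-1\} \in \Delta^+$ (with $\ul{0} \mapsto \0 = \emptyset$). On morphisms, for $f \colon \ns \to \ms$ in $\Omega$, I would define
$$\Phi(f) \colon \m \to \n, \qquad \Phi(f)(j) \;=\; \max\{\, k \in \ns : f(k) \leq j \,\}.$$
One first verifies that $\Phi(f) \in \Delta^+(\m,\n)$: the set is nonempty since $f(0)=0 \leq j$, the max is at most $n-1$ since $f(n) = m > j$ for every $j \in \{0,\ldots,m-1\}$, and monotonicity of $\Phi(f)$ follows from monotonicity of $f$. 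The degenerate target $\ms = \ul{0}$ gives $\m = \0$, and $\Phi(f)$ is then the unique empty map.

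Next I would check contravariant functoriality. Identities go to identities immediately, and for composable $\ns \xrightarrow{f} \ms \xrightarrow{g} \p$ in $\Omega$ the equality $\Phi(g \circ f) = \Phi(f) \circ \Phi(g)$ reduces, by monotonicity of $g$, to the set identity $\{k : g(f(k)) \leq j\} = \{k : f(k) \leq \max\{l : g(l) \leq j\}\}$, after which taking maxima on both sides is direct.

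For the inverse I would define $\Psi \colon \Delta^+ \to \Omega^{op}$ on objects by $\n \mapsto \ns$ and on morphisms, for $g \colon \m \to \n$, by
$$\Psi(g)(k) \;=\; \#\{\, j \in \m : g(j) < k \,\}, \qquad k = 0, 1, \ldots, n,$$
checking $\Psi(g)(0) = 0$, $\Psi(g)(n) = m$ (using $g(j) \in \n$), monotonicity in $k$, and functoriality in the same style. The identities $\Phi\Psi = \Id$ and $\Psi\Phi = \Id$ then fall out of the formulas, since both encode the same bijection between an extremity-preserving $f$ and its increment sequence $(f(k) - f(k-1))_{k=1}^{n}$, which is simultaneously a weak composition of $m$ into $n$ parts and, dually, a nondecreasing map $\m \to \n$ recovered from its fiber sizes. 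The only real obstacle is careful off-by-one bookkeeping and a clean handling of the empty boundary case $\ul{0} \leftrightarrow \0$; nothing else is deep.
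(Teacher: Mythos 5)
Your proof is correct; I checked the formulas and they do what you claim. For $f \colon \ns \to \ms$ in $\Omega$ the assignment $\Phi(f)(j) = \max\{k \in \ns : f(k) \leq j\}$ lands in $\n$ because $f(0)=0$ and $f(n)=m$, the functoriality identity reduces to $\{k : g(f(k)) \leq j\} = \{k : f(k) \leq \max\{l : g(l) \leq j\}\}$ exactly as you say, and the composites $\Psi\Phi$ and $\Phi\Psi$ collapse to $f(k) = \#\{j : j < f(k)\}$ and $g(j) = \max\{k : k \leq g(j)\}$ respectively; the degenerate case $\ul{0} \leftrightarrow \0$ is handled consistently. Your route is, however, genuinely different from the one taken in the paper. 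There, the functor $T \colon \Delta^+ \to \Omega^{op}$ is defined only on Mac Lane's generators $\mu \colon \2 \to \1$ and $\eta \colon \0 \to \1$, exploiting the presentation of $(\Delta^+,+,\0)$ as generated under ordinal sum and composition by a monoid object; this makes the definition two lines long but leaves to the reader the verification that the monoid relations are respected and that $T$ is bijective on hom-sets. You instead give closed formulas for both directions on arbitrary morphisms and verify functoriality and mutual inverseness by direct set-theoretic computation, so your argument is self-contained and needs no appeal to the generators-and-relations description of $\Delta^+$, at the cost of the off-by-one bookkeeping you acknowledge. The paper's choice has the advantage of meshing with the generator-based reductions it uses later (in the proof of the Leinster-style comparison, maps over cofaces and codegeneracies are again treated as generating all morphisms), whereas your increment-sequence remark makes the underlying combinatorial bijection --- extremity-preserving maps $\ns \to \ms$ versus nondecreasing maps $\m \to \n$, both recording a weak composition of $m$ into $n$ parts --- completely explicit. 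Both constructions produce the same isomorphism.
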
 
\begin{note}
The above claim is known in the literature as an example of ``Joyal duality'' \cite{Joyal_theta}
\end{note}

We will not give a detailed proof of the claim but we will give the main idea. To show that the claim holds we explicitly construct an isomorphism $T: \Delta^+ \to \Omega^{op}$.\ \\

On the objects, $T$ maps $\n= \{0,..., n-1\}$ to $\ns= \{0,..., n \}$. To see what $T$ does on morphisms we need to go back to Mac Lane's description of the category $\Delta^+$ \cite[p.172]{Mac}. The category $\Delta^+$ has a monoidal structure given by the the ordinal addition $+$. Mac Lane showed that the arrows in $\Delta^+$ are generated by addition and composition from $\mu: \2 \to \1$ and $\eta: \0 \to \1$. 
Therefore in order to define $T$ we simply have to give $T(\mu)$ and $T(\eta)$. \\

The maps $T(\mu)$ and $T(\eta)$ are, respectively, the opposite of the following maps of $\Omega$:
\begin{itemize}[label=$-$]
\item  $T(\mu)^{op}: \{0,1\} \to \{0,1,2\}$, the unique map that takes $0$ to $0$ and $1$ to $2$. 
\item $T(\eta)^{op}: \{0,1\} \to \{0\}$, the unique constant map.
\end{itemize} 

We leave the reader to check that  the functor $T$ we get is an isomorphism.

\subsection{A result of Leinster: $\Delta_X$ and $\P_{\ol{X}}$} 
For a  set $X$, Bergner \cite{Bergner_rigid,Bergner_mon_seg} then Lurie \cite{Lurie_HTT} considered a category $\Delta_X$ that we described as follows (see also \cite[Ch. 10.1]{Simpson_HTHC}). 

\begin{enumerate}
\item The objects of $\Delta_X$ are sequences $(x_0,...,x_n)$ for $\ns \in \Delta$;
\item  A morphism $\phi: (x_0, ...,x_n) \to (y_0,..., y_m)$ is a morphism $\phi: \ns \to \ms$  of $\Delta$  with the property that:
$$ x_i = y_{\phi(i)}, \ \ \  i=0,...,n.$$
\item the composition is the obvious one. 
\end{enumerate}

It's easy to see that we have a fibred category $p : \Delta_X \to \Delta$; where $p(x_0,...,x_n)= n$ and $p(\phi)= \phi$. Indeed given a morphism $\phi: n \to m$ of $\Delta$ with $(y_0,..., y_m)$ over $m$ we have a cartesian lifting $\phi: (x_0,...,x_n) \to (y_0,..., y_m)$ if we take $x_i:= y_{\phi(i)}$, $i=0,...,n$. When $X$ has one element then $p : \Delta_X \to \Delta$ is an isomorphism. \ \\
\ \\
The category $\Delta_X$ has been used by Bergner \cite{Bergner_rigid,Bergner_mon_seg}, Lurie \cite{Lurie_HTT} and Simpson  \cite{Simpson_HTHC}, to define enriched categories having $X$ as set of objects as presheaves over $\Delta_X$ . That idea goes back to Grothendieck-Segal who observed that a small category $\Ba$ can be recovered by its nerve $\Nv(\Ba): \Delta^{op} \to \Set$. \ \\

For a set $X$ we've constructed in \cite{SEC1} a strict $2$-category $\P_{\ol{X}}$ which classifies lax-morphisms from $\ol{X}$ to a $2$-category $\M$ in the sense that we have an isomorphism of $1$-categories:
$$2\textbf{-Func}(\P_{\ol{X}}, \M) \cong \Lax(\ol{X}, \M)$$
functorial in $X$. We give a brief description of $\P_{\ol{X}}$ below.
\begin{enumerate}

\item The objects of $\P_{\ol{X}}$ are the elements of $X$;
\item a $1$-morphism from $x$ to $y$ is a sequence $(x_0,...,x_n)$ with $x_0=x$ and $x_n=y$ ;
\item the composition is the concatenation of chains;
\item the identity of $x$ is the chain $(x)$;
\item the $2$-morphisms are parametrized by the morphism of $\Delta^+$. In fact we have a Grothendieck opfibration $ \Le_{xy}: \P_{\ol{X}}(x,y) \to \Delta^+$ for each pair of elements. And these opfibrations organize to form a $2$-functor $\Le: \P_{\ol{X}} \to (\Delta^+, +, \0)$. For example we have the following $2$-morphisms which somehow generate all the other ones: 
\[
\xy
(0,0)*+{x}="X";
(10,10)*+{y}="Y";
(20,0)*+{z}="E";
{\ar@{->}^-{(x,y)}"X";"Y"};
{\ar@{->}^-{(y,z)}"Y";"E"};
{\ar@{->}_-{(x,z)}"X";"E"};
{\ar@{=>}"Y"+(0,-4);"X"+(10,2)};
\endxy
\ \ \ \ \ \ \ \ \ \ \ \ 
\xy
(0,10)*+{x}="X";
(10,0)*+{(x)}="Y";
(20,10)*+{x}="E";
{\ar@{->}^-{(x,x)}"X";"E"};
{\ar@{=>}"Y"+(0,4);"X"+(10,-2)};
\endxy
\]

In the above diagrams, the one on the left  is a $2$-morphism  $(x,y,z) \to (x,z)$ which is parametrized by the unique map $\sigma_0: \2 \to \1$ of $\Delta^+$; and the one on the right is a $2$-morphism $(x) \to (x,x)$ parametrized by the unique map $\0 \to 1$ of $\Delta^+$.

\end{enumerate}

\begin{rmk}\label{rmk-px-delta}
One can observe that the objects of $\Delta_X$ correspond exactly to the $1$-morphisms of $\P_{\ol{X}}$. The relationship between $\P_{\ol{X}}$ and $\Delta_X$ has been outlined by Leinster \cite{Lei2} when $X$ has a single object; we recall that relationship below.
\end{rmk}

\begin{prop}[Leinster]
If $\M=(\ul{M}, \times,1)$ is a monoidal category for the cartesian product, then we have an isomorphism of categories:
$$\Colax[(\Delta^+,+,\0), \M] \xrightarrow{\cong} \Hom(\Delta^{op}, \ul{M}).$$
\end{prop}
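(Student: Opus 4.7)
The plan is to construct explicit inverse functors using the Joyal duality $T: \Delta^+ \to \Omega^{op}$ just recalled, in a way that leverages the cartesian property of $\times$. On objects I would match $\F(\n) \in \ul{M}$ on the colax side with $X_n$ on the simplicial side (consistent with $T(\n) = \ns$), so the remaining content is to match the functoriality and the colaxity data against the simplicial face and degeneracy maps.

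Starting from a simplicial object $X: \Delta^{op} \to \ul{M}$, I would define a colax functor $\F$ by: (a) setting $\F(\alpha) := X(T(\alpha))$ on each morphism $\alpha$ of $\Delta^+$, which produces a genuine functor $\Delta^+ \to \ul{M}$ through the extremity-preserving morphism $T(\alpha)$; (b) defining the colaxity $\F(\n+\m) \to \F(\n) \times \F(\m)$ as the Segal map whose two projections are $X$ applied to the subinterval inclusions $\ns \hookrightarrow \ul{n+m}$ hitting $\{0,\dots,n\}$ and $\ms \hookrightarrow \ul{n+m}$ hitting $\{n,\dots,n+m\}$, paired via the universal property of the product; and (c) taking the unit colaxity $\F(\0) \to 1$ to be the unique such morphism, since $1$ is terminal in $\M$.

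Conversely, given $\F$ I would recover $X(f): X_n \to X_m$ for each morphism $f: \ms \to \ns$ in $\Delta$ via the canonical factorization $f = h \circ g$, where $g: \ms \to \ul{b-a}$ preserves extremities (with $a = f(0)$, $b = f(m)$) and $h: \ul{b-a} \hookrightarrow \ns$ is the subinterval inclusion. Under Joyal duality, $h$ corresponds to an ordinal decomposition $\n = \n_1 + \n_2 + \n_3$ in $\Delta^+$, where $\n_2$ has $b-a$ elements. I would then set $X(f)$ to be the composite
\[
\F(\n) \longrightarrow \F(\n_1) \times \F(\n_2) \times \F(\n_3) \longrightarrow \F(\n_2) \xrightarrow{\F(T^{-1}(g))} \F(\m),
\]
where the first arrow is the iterated colaxity, the second is projection onto the middle factor, and the third comes from $\F$ applied to the $\Delta^+$-image of $g$ under Joyal duality.

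The main obstacle is showing that $X$ so defined respects composition in $\Delta$; this reduces, under the canonical factorization, to checking the coherence of iterated colaxity (associativity and naturality with respect to $\Delta^+$-morphisms) against the behavior of composing subinterval restrictions in $\Delta$. The cartesian structure is essential here: it forces every iterated colaxity $\F(\n_1 + \cdots + \n_k) \to \prod_i \F(\n_i)$ to be uniquely determined by its projections, so that the colax associativity axioms yield precisely the simplicial identities among the non-extremity-preserving face maps, and the unit axiom is automatic because $1$ is terminal. Once compositionality is in place, the two round-trip identities follow from the uniqueness of the canonical factorization, and the extension to morphisms (natural transformations on the simplicial side, icons on the colax side) is routine.
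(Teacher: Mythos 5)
Your proposal is correct and follows essentially the same route as the paper's sketch: Joyal duality $\Delta^+\cong\Omega^{op}$ handles the extremity-preserving (inner) morphisms via the underlying functor of $\F$, the remaining (outer) face maps are obtained from the colaxity maps followed by projections, and the cartesian structure guarantees the colaxity is recovered from its components. Your use of the canonical factorization of a map of $\Delta$ into an extremity-preserving map followed by a subinterval inclusion is just a repackaging of the paper's reduction to generating (outer coface) morphisms, so nothing essentially new or missing is involved.
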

The result of Leinster has a general form given by:
\begin{prop}\label{Leinster_delta}
Let $\M=(\ul{M}, \times,1)$ be a monoidal category for the cartesian product. 
\begin{enumerate}
\item Then we have an isomorphism of categories:
$$\Colax[\P_{\ol{X}}, \M] \xrightarrow{\cong} \Hom(\Delta_X^{op}, \ul{M}).$$
\item The full subcategory $\Colax[\P_{\ol{X}}, \M]_n$ of normal colax morphisms, is isomorphic to the category of  unital Segal $\M$-precategories $\Pcal\C(X,\M)$.
\end{enumerate} 
\end{prop}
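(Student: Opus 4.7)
My plan is to construct an explicit isomorphism by repackaging the data of a colax diagram using two essential features: the cartesian structure on $\M$ and the Joyal duality $T : \Delta^+ \cong \Omega^{op}$ established earlier. When $\otimes = \times$, the universal property of the product converts any colaxity map $\Fa(f \otimes g) \to \Fa(f) \times \Fa(g)$ into a pair of \emph{projection}-style components $\Fa(f \otimes g) \to \Fa(f)$ and $\Fa(f \otimes g) \to \Fa(g)$; this is precisely what allows the colax data to collapse to an ordinary presheaf on $\Delta_X$.

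Given $\Fa \in \Colax[\P_{\ol{X}}, \M]$, I would define $G = \Psi(\Fa) : \Delta_X^{op} \to \ul{M}$ on objects by $G(x_0,\dots,x_n) := \Fa(x_0,\dots,x_n)$. To define $G$ on morphisms, I would factor each $\phi : \ns \to \ms$ of $\Delta$ satisfying $x_i = y_{\phi(i)}$ as a composite of: (i) an endpoint-preserving map (a morphism of $\Omega$), which under Joyal duality corresponds to a morphism of $\Delta^+$, hence via the opfibration $\Le_{xy} : \P_{\ol{X}}(x_0,x_n) \to \Delta^+$ to a $2$-morphism of $\P_{\ol{X}}$; and (ii) inclusions onto initial/terminal subchains. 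The $\Omega$-part is then evaluated by $\Fa$ on the corresponding $2$-morphism, while the subchain inclusions are evaluated through the appropriate projection factor of the colaxity map $\Fa(x_0,\dots,x_n) \to \Fa(x_0,\dots,x_k) \times \Fa(x_k,\dots,x_n)$. Conversely, given $G$, the inverse $\Psi^{-1}$ assigns to each chain the object $G$ takes on it, reconstructs the colaxity maps by pairing two subchain face maps via the universal property of $\times$, and decodes $2$-morphisms through Joyal duality. Icons between colax diagrams translate in the obvious way to natural transformations of presheaves, producing the isomorphism on morphism sets.

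The principal technical burden is verifying that the colax coherence axioms (the associativity pentagon for the colaxity $\varphi$ and the compatibility of $\varphi$ with $2$-morphisms) correspond exactly to the simplicial-type identities encoded in functoriality on $\Delta_X^{op}$; concretely, the pentagon on a triple composite $\Fa(fgh)$ must match the $d_i d_j$-relations, and the $\varphi$-compatibility must match the interaction of face maps with morphisms of $\Omega$. This is where Leinster's original one-object argument \cite{Lei2} does the essential combinatorial work; the $X$-parametrized extension amounts to bookkeeping of endpoint data, which is automatic from the fibration $\Delta_X \to \Delta$. For part (2), normality of $\Fa$ forces $\Fa$ to send every identity chain $(x)$ to the monoidal unit $I = 1$ of $\ul{M}$ (the colaxity-iso condition is then automatic, since $\otimes = \times$ has $1$ terminal), and under $\Psi$ this translates to $G(x) = 1$ for every $0$-chain, which is exactly the unit condition defining $\Pcal\C(X, \M)$. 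Hence $\Psi$ restricts to the claimed isomorphism on the unital subcategories, yielding the proposition.
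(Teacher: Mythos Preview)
Your proposal is correct and follows essentially the same route as the paper: both split the morphisms of $\Delta_X$ into an endpoint-preserving ($\Omega$-)part handled via Joyal duality and the opfibration $\Le_{xy}$, and an outer-coface part handled by composing the colaxity map with a cartesian projection, then leave the functoriality/coherence check as routine bookkeeping inherited from Leinster's one-object argument. Your treatment of part~(2) is in fact slightly more explicit than the paper's, which simply leaves it to the reader.
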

For the definition of Segal $\M$-precategories we refer the reader to \cite[Definition 10.1.1]{Simpson_HTHC}.

\subsubsection{Sketch of the proof of Proposition \ref{Leinster_delta}}
The idea of the proof is the same as the one given by Leinster.  We will only show how one constructs (functorially) a diagram $\Delta_X \to \ul{M}$ out of a colax diagram $\px \to \M$;  the inverse functor is obtained by reversing the process.\ \\

Let $\F: \px \to \M$ be a colax diagram. We will denote by $\Delta_{\F}$ the diagram we are about to construct.  As observed in Remark \ref{rmk-px-delta}, the objects of $\Delta_X$ are in one-one correspondence to the $1$-morphisms of $\px$; so it's clear how to define $\Delta_{\F}$ on objects. It remains to define $\Delta_{\F}$ on morphisms.\ \\

The morphisms of $\Delta_X$ are organized in two sets:
\begin{itemize}[label=$-$]
\item the set of morphisms $\phi: (x_0, ...,x_n) \to (y_0,..., y_m)$ such that $x_0=y_0$ and $x_n=y_m$ i.e, the ones such that $p(\phi)$ is a morphism of $\Omega$.
\item the set of morphisms $\phi: (x_0, ...,x_n) \to (y_0,..., y_m)$ such that $p(\phi)$ is not a morphism of $\Omega$.
\end{itemize}
We will define separately $\Delta_{\F}$ for morphisms over $\Omega$ and for the other morphisms.
\paragraph{Definition of $\Delta_{\F}$ for morphisms over $\Omega$.}
Let $\Omega_X$ be the category obtained by pullback of the fibration $p: \Delta_X \to \Delta$ along the inclusion $\Omega \to \Delta$:
\[
\xy
(0,15)*+{\Omega_X}="A";
(30,15)*+{\Delta_X}="B";
(0,0)*+{\Omega}="C";
(30,0)*+{\Delta}="D";
{\ar@{.>}^-{}"A";"B"};
{\ar@{.>}_-{q}"A";"C"};
{\ar@{->}_-{p}"B";"D"};
{\ar@{->}^-{}"C";"D"};
\endxy
\]  

In particular $q: \Omega_X \to \Omega$ is also a fibred category. Note that the fibred category $p: \Delta_X \to \Delta$ has the property that a map in $\Delta$ whose codomain is in the range of the image of $p$ has exactly a \textbf{unique lifting}. More precisely given $\phi: \ns \to \ms$ in $\Delta$ and $(y_0,..., y_m)$ over $\ms$, then there is exactly a unique map in $\Delta_X$ over $\phi$, whose codomain is  $(y_0,..., y_m)$. \ \\

Let $x$ and $y$ be two elements of $X$. The pair $(x,y)$ represents simultaneously:
\begin{itemize}[label=$-$]
\item an object $(x,y)$ of $\Delta_X$ and 
\item a $1$-morphism $(x,y)$ of $\px(x,y)$.
\end{itemize} 

As an object of $\Delta_X$, hence of $\Omega_X$, we can form the comma category 
$$\Omega(x,y):= {\Omega_X}_{/(x,y)}$$
 consisting of all morphisms of $\Omega_X$ with codomain $(x,y)$. We have a canonical fibred category
$$ {\Omega_X}_{/(x,y)} \to \Omega_X $$ 
which, composed with $q$ gives a fibred category:
$$\Omega(x,y) \to \Omega. $$ 
If we consider the opfibration $\Omega(x,y)^{op} \to \Omega^{op}$ then we have:
\begin{claim}
In the diagram:
\[
\xy
(0,15)*+{\px(x,y)}="A";
(30,15)*+{\Omega(x,y)^{op}}="B";
(0,0)*+{\Delta^{+}}="C";
(30,0)*+{\Omega^{op}}="D";
{\ar@{->}_-{\Le}"A";"C"};
{\ar@{->}_-{q}"B";"D"};
{\ar@{->}^-{\cong}"C";"D"};
\endxy
\]  
there is an isomorphism $J: \px(x,y) \to \Omega(x,y)^{op}$ that makes everything compatible.
\end{claim}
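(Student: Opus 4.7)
The plan is to identify both $\px(x,y)$ and $\Omega(x,y)^{op}$ with discrete opfibrations over $\Delta^+\cong\Omega^{op}$ classifying the same $\Set$-valued functor, so that $J$ appears as the canonical isomorphism between their total categories. The Joyal duality $T:\Delta^+\xrightarrow{\cong}\Omega^{op}$ recalled just above is the bridge between the two bases, and the unique-lifting property of $p:\Delta_X\to\Delta$ noted just before the claim is what forces the fibres to match on the nose.

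First I would spell out the fibre over $\n\in\Delta^+$ of $\Le:\px(x,y)\to\Delta^+$: by the construction of $\px$ in Section~3, it is the discrete set of chains $(x_0,\dots,x_n)$ with $x_0=x$ and $x_n=y$. Next I compute the fibre over $\ns=T(\n)\in\Omega^{op}$ of $\Omega(x,y)^{op}$ using the unique-lifting property: any morphism in $\Omega_X$ with codomain $(x,y)$ over a fixed $\Omega$-map out of $\ns$ is determined by pulling labels back, and unwinding this assembles into the same set of chains of length $n+1$ from $x$ to $y$. On objects $J$ is therefore the identity after this identification.

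To promote $J$ to a functor, I compare the opfibration actions of a map $f:\n\to\m$ of $\Delta^+$. On the $\px$ side, the cocartesian lift sends $(x_0,\dots,x_n)$ to $(x_{\phi(0)},\dots,x_{\phi(m)})$ where $\phi=T(f)^{op}:\ul{m}\to\ns$; this can be read off from the two generating $2$-morphisms of $\px$ drawn in Section~3 corresponding to $\mu:\2\to\1$ and $\eta:\0\to\1$. On the $\Omega(x,y)^{op}$ side, the opcartesian lift of $T(f)$ is by definition dual to the cartesian lift of $\phi$ in $\Omega_X$, which by the pull-back-of-labels formula built into the definition of $\Omega_X$ sends $(x_0,\dots,x_n)$ to $(x_{\phi(0)},\dots,x_{\phi(m)})$. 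The two actions coincide, so $J$ extends to an isomorphism of discrete opfibrations over $\Delta^+\cong\Omega^{op}$, and the compatibility with the two projections is immediate from the construction.

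The main obstacle is pinning down the formula for $f_!$ on the $\px$ side, i.e. that the $\Delta^+$-action on chains is exactly ``evaluation at $T(f)^{op}$''. I would treat this by reducing to the generators $\mu,\eta$ of $\Delta^+$ (Mac Lane's presentation already invoked in the text), verifying the formula directly from the two explicit $2$-morphisms of Section~3, and then extending to arbitrary morphisms of $\Delta^+$ by functoriality of $\Le$ and of $T$; a symmetric reduction handles composition of the cartesian lifts in $\Omega_X$ on the other side.
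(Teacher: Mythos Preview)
Your proposal is correct and follows essentially the same route as the paper: both arguments recognize $\Le$ and $q$ as discrete opfibrations with the same fibres (chains from $x$ to $y$ of prescribed length), and both reduce the comparison of morphisms to Mac Lane's generators $\mu,\eta$ of $\Delta^+$, checking the two explicit $2$-morphisms of $\px$ against the pull-back-of-labels formula in $\Omega_X$. The only cosmetic difference is that you package the verification as ``the two classifying functors agree on objects and on transition maps'' via the explicit formula $f_!(x_0,\dots,x_n)=(x_{\phi(0)},\dots,x_{\phi(m)})$ with $\phi=T(f)^{op}$, whereas the paper phrases the same content as ``take the unique cocartesian lift and check its codomain is the expected one''; these are two wordings of the same computation.
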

The definition of $J$ on objects is clear since an object in $\px(x,y)$ can be identified with a sequence $(x,...,y)$ i.e, an object of $\Omega(x,y)$. This correspondence is clearly a bijection. The two categories $\px(x,y) $ and $\Omega(x,y)^{op}$ share common properties, namely:
\begin{itemize}[label=$-$]
\item they are both Reedy categories;
\item they have a terminal object $(x,y)$ which is the \ul{unique} object over $\1$ (resp. $\ul{1}$);
\item morphisms that are over an identity morphism (in $\Delta^+$ or $\Omega$) are identities;
\item there are non nontrivial isomorphisms since they are Reedy categories.
\end{itemize} 
\ \\
It follows that the two opfibrations (or cofibred categories) $q$ and $\Le$ have the following properties:
\begin{itemize}[label=$-$]
\item each fiber is a discrete category i.e a set; 
\item for every morphism $\phi \in  \Delta^+$ (resp. $\Omega^{op}$), and for every fixed object $s$ over the codomain of $\phi$, there is a unique  lifting of $\phi$ with codomain $s$; and that lifting is (automatically) cocartesian.
\end{itemize}

Having these properties at hand, we can now define $J(\phi)$ for a morphism 
$$ \phi: (x_0 ..., x_n) \to (y_0 ...., y_m)$$
in $\px(x,y)$ over $\phi: \n \to \m$ in $\Delta^+$.  The map $\phi: \n \to \m \in \Delta^+$ corresponds to a unique map denoted again $\phi: \ns \to \ms$ in $\Omega^{op}$; and since $J((x_0 ..., x_n))$ is over $\ns$ there is a unique (cocartesian) lifting:
$$ J((x_0 ..., x_n)) \to s $$
of  $\phi: \ns \to \ms$ for some object $s$ over $\ms$. 
\begin{claim}
The object $s$ is precisely $J((y_0 ..., y_m))$. And we define $J(\phi)$ to be that lifting.
\end{claim}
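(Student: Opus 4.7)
The plan is to exploit the discrete-fiber property of the two opfibrations $\Le_{xy}: \px(x,y) \to \Delta^+$ and $q: \Omega(x,y)^{op} \to \Omega^{op}$ recorded just above the claim: each such lift of $\phi$ starting at a prescribed source is unique, and in particular completely determined by its codomain. So the task reduces to computing the codomain of the unique cocartesian lift of $\phi: \ns \to \ms$ in $\Omega^{op}$ starting at $(x_0,\ldots,x_n) = J((x_0,\ldots,x_n))$, and checking that it coincides with $(y_0,\ldots,y_m)$.

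First I would make this codomain explicit by dualizing the formula for the cartesian lift in $p: \Delta_X \to \Delta$ recalled earlier in the text. A cocartesian lift in $\Omega(x,y)^{op} \to \Omega^{op}$ is the opposite of a cartesian lift in $\Omega_X \to \Omega$; the map $\phi: \ns \to \ms$ in $\Omega^{op}$ corresponds to a map (call it $\phi_\Omega: \ms \to \ns$) of $\Omega$ preserving extremities, and the cartesian lift of $\phi_\Omega$ with codomain $(x_0,\ldots,x_n)$ has domain $(x_{\phi_\Omega(0)}, \ldots, x_{\phi_\Omega(m)})$. Reading this back in $\Omega(x,y)^{op}$, the desired codomain $s$ is therefore the reindexed chain $(x_{\phi_\Omega(0)}, \ldots, x_{\phi_\Omega(m)})$.

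Next I would invoke the description of $\px$ from \cite{SEC1}: a $2$-morphism $(x_0,\ldots,x_n) \to (y_0,\ldots,y_m)$ of $\px(x,y)$ lying over $\phi: \n \to \m$ in $\Delta^+$ exists precisely when $y_j = x_{\phi_\Omega(j)}$ for every $j = 0,\ldots,m$, where $\phi_\Omega$ is the image of $\phi$ under Joyal duality $T: \Delta^+ \xrightarrow{\cong} \Omega^{op}$ (viewed in $\Omega$). The generating $2$-morphisms of $\px$ displayed earlier exhibit this pattern: $(x,y,z) \to (x,z)$ over $\sigma_0$ forces $y_0 = x_0$ and $y_1 = x_2$, while $(x) \to (x,x)$ over $\0 \to \1$ satisfies $y_0 = y_1 = x_0$. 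Combined with the previous paragraph this gives $(y_0,\ldots,y_m) = (x_{\phi_\Omega(0)}, \ldots, x_{\phi_\Omega(m)}) = s$, which is the claim.

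The main obstacle is making the compatibility condition on $\px$ precise; it has to be extracted from the construction in \cite{SEC1} and amounts to saying that $\Le_{xy}: \px(x,y) \to \Delta^+$ is a \emph{discrete} opfibration whose fiber over $\n$ is the set of chains $(x_0,\ldots,x_n)$ from $x$ to $y$, with cocartesian transport along $\phi$ given by reindexing through $T(\phi)$. Once this is granted the claim becomes a direct comparison under Joyal duality; the functoriality of $J$ on morphisms, needed to upgrade $J$ into a full isomorphism of categories, then follows from uniqueness of cocartesian lifts on each side combined with the functoriality of the Grothendieck construction.
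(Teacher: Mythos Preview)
Your argument is correct in outline but takes a genuinely different route from the paper's. You compute the codomain $s$ directly via the explicit reindexing formula for cartesian lifts in $\Delta_X \to \Delta$, then match it against a general description of the $2$-morphisms of $\px(x,y)$ in terms of Joyal duality. The paper instead argues by reduction to generators: since cofaces and codegeneracies generate $\Delta^{+}$ (Mac Lane), and since the opfibrations $\Le_{xy}$ are compatible with the composition in $\px$ and the ordinal addition in $\Delta^{+}$, one may assume $\phi$ lies over $\2 \to \1$ or $\0 \to \1$. In those two cases the target fiber over $\ul{1}$ in $\Omega(x,y)^{op}$ contains a \emph{single} object, namely $(x,y)$, so $s$ is forced without any computation.

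The trade-off is this. Your approach is conceptually cleaner once the reindexing description of $\px(x,y)$ is in hand, but as you yourself flag, that description is the real content and you defer it to ``extracting from \cite{SEC1}''. Verifying it in general is essentially equivalent to the claim itself, so the argument risks being circular unless you actually carry out the extraction (which you sketch only on the two generators). The paper's reduction sidesteps this entirely: by landing in a fiber with a unique object it never needs the general reindexing formula, and it stays self-contained relative to the brief description of $\px$ given in the present text. Your route would become fully rigorous if you proved the reindexing formula on generators (which you do) and then checked its compatibility with composition in $\px$ and ordinal addition in $\Delta^{+}$ --- but at that point you have reproduced the paper's reduction in different clothing.
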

To prove that the claim holds it's enough to assume that $\phi$ is over a codegeneracy $\sigma_i: \n \to \n+\1$ or a coface $d_i: \n+\1 \to \n$ of $\Delta^+$. Indeed, maps overs cofaces and codegeneracies generate (by composition) all other maps in $\px(x,y)$. This observation goes back to Mac Lane \cite{Mac}. Furthermore, without loss of generality, we can assume that $\n=\1$ or $\n=\0$ since the opfibrations $\Le_{xy}$ are compatible with the composition in $\px$ and the ordinal addition in $\Delta^+$.\ \\

After these reductions we have two cases to consider:
\begin{itemize}
\item $\phi: (x,t,y) \to (x,y)$  (over $\phi: \2 \to \1$)
\item $\phi: (x) \to (x,x)$  (over $\phi: \0 \to \1$ if $x=y$)
\end{itemize}  
for which it's clear that $s= J(x,y)$ (resp $s= J(x,x)$) since it's the unique object over $\ul{1}$ in $\Omega(x,y)^{op}$ (resp $\Omega(x,x)^{op}$). This shows that the previous claim holds so that $J(\phi)$ is defined. \ \\

The fact that $J$ is a functor and that has an inverse is straightforward; we leave it to the reader. \ \\

It's now clear how we define $\Delta_{\F} (\phi)$ for  $\phi$ over a morphism of $\Omega^{op}$. One has a decomposition:
$$\Omega^{op}_X \cong \coprod_{(x,y) \in X^2} \Omega(x,y)^{op} \cong \coprod_{(x,y) \in X^2} \px(x,y)$$
which gives:
$$\Hom(\Omega^{op}_X,\ul{M})  \cong \prod_{(x,y) \in X^2} \Hom(\px(x,y), \ul{M}).$$

\paragraph{Definition of $\Delta_{\F}$ for morphisms \ul{not} over $\Omega$.} Morphisms of $\Delta_X$ not over $\Omega$ are generated by composition by the non-inner cofaces and the maps over $\Omega$. So we can assume that these maps $\phi$ are of one of the following forms:
\begin{itemize}[label=$-$]
\item $\phi: (x_0,...,x_n) \to (x_0,...,x_n, x_{n+1})$ 
\item $\phi: (x_0,...,x_n) \to (y, x_0...,x_n)$. 
\end{itemize}

To define $\Delta_{\F}(\phi)$ for these two type of maps we use the colaxity maps of $\F$ as follows. For $\phi: (x_0,...,x_n) \to (x_0,...,x_n, x_{n+1})$ we get the map 
$$\Delta_{\F}(\phi): \F(x_0,...,x_n, x_{n+1}) \to \F(x_0,...,x_n)$$
as the composite
$$\F(x_0,...,x_{n+1})=\F[(x_0,...,x_n) \otimes(x_n, x_{n+1})] \xrightarrow{colax} \F(x_0,...,x_n) \times \F(x_n,x_{n+1}) \xrightarrow{pr_1} \F(x_0,...,x_n).$$ 

Similarly we get $\Delta_{\F}(\phi): \F(y, x_0,...,x_n) \to \F(x_0,...,x_n)$ as the composite:
$$\F(y,...,x_{n})=\F[(y,x_0) \otimes (x_0,...,x_n)] \xrightarrow{colax} \F(y,x_0) \times \F(x_0,...,x_n) \xrightarrow{pr_2} \F(x_0,...,x_n).$$ 

This complete the definition of $\Delta_{\F}$ on morphisms. It remains to show that $\Delta_{\F}$ is a functor  i.e, that it respects the composition and identities. This is tedious but not hard to check and we leave it to the reader. One mainly uses the fact that the colaxity maps are coherent; and that the coherence for $\F$ is functorial in the $2$-morphisms of $\px$. The later means that if $\alpha: s \to s'$ and $\beta: t \to t'$ are composable $2$-morphisms in $\px$ then the following commutes:
\[
\xy
(0,20)*+{\F(s\otimes t)}="A";
(30,20)*+{\F(s) \times \F(t)}="B";
(0,0)*+{\F(s' \otimes t')}="C";
(30,0)*+{\F(s') \times \F(t')}="D";
{\ar@{->}^-{colax}"A";"B"};
{\ar@{->}_-{\F(\alpha \otimes \beta) }"A";"C"};
{\ar@{->}^-{\F(\alpha) \times \F(\beta)}"B";"D"};
{\ar@{->}^-{colax}"C";"D"};
\endxy
\] 

This completes the proof of Assertion $(1)$. Assertion $(2)$ is left to the reader. $\qed$  
\section{Limits in $\Colax[\C, \M]_n$ } \label{limit_colax}
\begin{prop}
For any $2$-category $\C$ and any  locally cocomplete $2$-category $\M$ the category 
$\Colax(\C, \M)$ is cocomplete.
\end{prop}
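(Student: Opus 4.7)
The plan is to construct colimits pointwise, exploiting the local cocompleteness of $\M$. A preliminary reduction: since any icon has identity object-components, two colax morphisms can be related by an icon only if they agree on objects, so $\Colax(\C,\M)$ decomposes as a disjoint union of subcategories indexed by functions $\Ob(\C) \to \Ob(\M)$. Any small diagram thus splits into pieces that each lie in a single component, and it suffices to construct colimits inside one such component.

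Fix a small diagram $i \mapsto \F_i$ of colax morphisms sharing a common object assignment $A \mapsto \F A$. Define the candidate colimit $\F$ by the same object assignment, and on each hom-category by
\[ \F(f) := \colim_{i} \F_i(f), \]
where the colimit is formed in the cocomplete category $\M(\F A,\F B)$. A $2$-morphism $\alpha : f \to f'$ in $\C(A,B)$ induces $\F(\alpha)$ as the unique arrow out of $\colim_{i}\F_i(f)$ classifying the cocone of composites $\F_i(\alpha)$ followed by the coprojections of $\colim_i \F_i(f')$; this makes each $\F_{AB}$ a functor. For a composable pair $(f,g)$ of $1$-morphisms, define the colaxity map $\F(f \otimes g) \to \F(f) \otimes \F(g)$ as the unique morphism out of $\colim_i \F_i(f \otimes g)$ induced by the cocone whose $i$-component is
\[ \F_i(f \otimes g) \xrightarrow{\varphi_i} \F_i(f) \otimes \F_i(g) \longrightarrow \F(f) \otimes \F(g), \]
where $\varphi_i$ is the colaxity of $\F_i$ and the second arrow is the image under the functor $\otimes : \M(\F A,\F B)\times\M(\F B,\F C) \to \M(\F A,\F C)$ of the two coprojections. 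By construction the coprojections $\F_i(f) \to \F(f)$ assemble into icons $\iota_i : \F_i \to \F$ that are compatible with the transition icons of the diagram, and any other cocone of icons $\F_i \to \G$ produces, objectwise by the universal property of $\colim_i \F_i(f)$, a unique candidate icon $\F \to \G$ whose compatibility with the colaxity data is then forced by uniqueness of factorizations through the colimits.

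The heart of the argument is the verification that the candidate colaxity maps are natural in $(f,g)$ and satisfy the associator and unitor coherence axioms for a colax morphism. The subtlety is that $\otimes$ is not assumed to preserve colimits, so one \emph{cannot} identify $\F(f)\otimes \F(g)$ with $\colim_i (\F_i(f)\otimes \F_i(g))$. Instead, each coherence square must be proved commutative by exhibiting two cocones, over $\colim_i \F_i(f \otimes g \otimes h)$ in the associativity case and over $\colim_i \F_i(\Id)$ in the unit case, whose common factorization through the colimit is forced by the coherence already satisfied by each individual $\F_i$ together with the mere functoriality of $\otimes$. Once this bookkeeping is carried out, the universal property of $\F$ follows formally, and the proposition is established. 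This coherence diagram chase is the main obstacle; everything else is a direct application of the universal property of pointwise colimits.
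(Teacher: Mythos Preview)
Your proof is correct and follows the same approach as the paper: colimits in $\Colax(\C,\M)$ are computed level-wise, using that the colaxity maps point \emph{out of} $\F(f\otimes g)$ so their construction only requires the universal property of a colimit and mere functoriality of $\otimes$. The paper's own proof is the single sentence ``colimits in the category of colax diagrams are computed level-wise''; you have simply unpacked this in detail, including the (correct) observation that one never needs $\otimes$ to commute with colimits.
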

\begin{proof}
Indeed colimits in the category of colax diagrams are computed level-wise.
\end{proof}
We leave the reader to check that `being a normal colax functor' is stable by colimits, therefore we have:
\begin{cor}\label{cor-colimit}
For any $2$-category $\C$ and any  locally cocomplete $2$-category $\M$ the category 
$\Colax(\C, \M)_n$ is cocomplete.
\end{cor}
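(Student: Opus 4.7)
The plan is to reduce to the preceding proposition and then verify that normality is preserved under the level-wise colimits it provides. Given a small diagram $D : \J \to \Colax(\C, \M)_n$, I first regard it as a diagram in the larger category $\Colax(\C, \M)$ and form its colimit $\F := \colim_{j \in \J} D(j)$, which exists by the preceding proposition and is computed objectwise (i.e.\ for each pair $(A,B)$ of objects of $\C$, the value $\F_{AB}$ is the level-wise colimit of the functors $D(j)_{AB}$ in the locally cocomplete $\M(\F A, \F B)$, and the colaxity maps are assembled from those of the $D(j)$ via the universal property).

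Next I would check the two conditions defining normality for $\F$. For the unit condition, note that for each $j$ we have $D(j)(\Id_A) = \Id_{\F A}$ with structure map being an identity; since the colimit is level-wise and the diagram of unit components is constantly $\Id_{\F A}$, the induced map $\F(\Id_A) \to \Id_{\F A}$ is again the identity. For the colaxity condition at an identity, the map $\F(\Id \otimes f) \to \F(\Id) \otimes \F(f)$ is obtained as the colimit of the components $D(j)(\Id \otimes f) \to D(j)(\Id) \otimes D(j)(f)$, each of which is an isomorphism by normality of $D(j)$; since colimits in $\M(\F A,\F B)$ are computed as colimits of the target (using that $\otimes$ in $\M$ commutes with colimits slot-wise by local cocompleteness and biclosedness inherited from the ambient set-up), the colimit of isomorphisms is an isomorphism.

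Having verified that the objectwise colimit $\F$ lies in $\Colax(\C, \M)_n$, it remains to observe that the universal property transfers: any cone from $D$ to a normal colax functor $\G$ is in particular a cone in $\Colax(\C, \M)$, hence factors uniquely through $\F$, and the factoring morphism, being between normal colax functors, is automatically a morphism of $\Colax(\C, \M)_n$. This shows $\F$ is the colimit of $D$ in $\Colax(\C, \M)_n$, yielding cocompleteness.

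The only delicate step is the preservation of the colaxity isomorphisms by the level-wise colimit; this is where the assumption of local cocompleteness of $\M$ (together with biclosedness, which makes $\otimes$ cocontinuous in each variable, as used throughout the paper) is genuinely needed, and I would flag that as the main point worth checking carefully. The rest is formal manipulation that the author indeed dispatches in a single line.
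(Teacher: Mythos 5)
Your proposal is correct and follows essentially the same route as the paper, which simply forms the level-wise colimit in $\Colax(\C,\M)$ and leaves to the reader the check that normality is preserved — precisely the check you carry out (and your observation that fullness of the subcategory makes the universal property transfer automatically is the right way to finish). The one point worth noting is that the colaxity-isomorphism step is even easier than you suggest: since each $D(j)(\Id)$ is literally the identity, the target $\F(\Id)\otimes\F(f)\cong\F(f)$ only requires $\Id\otimes-$ to preserve colimits, which is automatic via the unitor, so full cocontinuity of $\otimes$ is not needed here.
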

For limits however things get complicated because limits are no longer computed level-wise.  This is the same thing as colimits in the category $\Lax(\C,\M)$.  In \cite{COSEC1} we show that under some conditions on $\M$, the category $\Lax(\C,\M)$ is cocomplete when $\M$ is so. The assumption on $\M$ is demanding that for every $1$-morphism $f$, the horizontal composition $f \otimes -$ commutes with colimits; something we know to be true if $\M$ is a biclosed $2$-category or a monoidal closed category.\ \\
Having that in mind, we might want to use the isomorphism mentioned earlier:
$$ \Colax(\C,\M) \xrightarrow{\cong} \{\Lax(\C^{2\tx{-op}},\M^{2\tx{-op}})\}^{op}$$
and say that limits in $\Colax(\C,\M)$ are the same as colimits in $\Lax(\C^{2\tx{-op}},\M^{2\tx{-op}})$. But the only problem with this is that $\M^{2\tx{-op}}$ may not satisfy the property `$f \otimes -$ commutes with colimits'; because this will be equivalent to ask that $f \otimes -$ commutes with limits in $\M$ ! And that assumption fails to be true in general.\ \\

So we cannot use our previous result for $\Lax(\C,\M)$. We give below a direct approach to compute limits in our specific case of $\Colax(\C, \M)_n$ where $\C$ is a locally Reedy $2$-category which is simple and direct-divisible. \ \\ 

\begin{nota}
If $m \in \lambda$ we will denote by $\tau_m : \Colax(\C^{\leq m+1}, \M)_n \to \Colax(\C^{\leq m}, \M)_n$ the restriction  functor induced by the inclusion $\iota : \C^{\leq m} \hookrightarrow \C^{\leq m+1}$. 
\end{nota}
The key step to compute limits is the following:
\begin{lem}\label{creation-limit}
The functor $\tau_m : \Colax(\C^{\leq m+1}, \M)_n \to \Colax(\C^{\leq m}, \M)_n$ creates limits.
\end{lem}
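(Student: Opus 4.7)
The plan is to construct, from a diagram $D \colon J \to \Colax(\C^{\leq m+1}, \M)_n$ with a limit cone $\pi_\alpha \colon L \to \tau_m F_\alpha$ (where $F_\alpha := D(\alpha)$) in $\Colax(\C^{\leq m}, \M)_n$, a unique extension of $L$ to a normal colax functor $\tilde{L} \colon \C^{\leq m+1} \to \M$, and then to verify that $\tilde{L}$ with the obvious projections is the limit of $D$. Since icons preserve the assignment on objects, all the $F_\alpha$ and $L$ share a common object map $X \colon \ob(\C) \to \ob(\M)$, which I impose on $\tilde{L}$; so the only new data to specify lives at degree $m+1$.

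For each $1$-morphism $z$ of $\C$ of degree $m+1$, say in $\C(A,B)$, both the colax-matching object $\colaxmatch(L, z)$ and each $\colaxmatch(F_\alpha, z)$ depend only on the respective functors at degrees strictly below $m+1$, where everything is already defined. The projections $\pi_\alpha$ induce compatible maps $\colaxmatch(L, z) \to \colaxmatch(F_\alpha, z)$, and each $F_\alpha$ has its colaxity map $F_\alpha(z) \to \colaxmatch(F_\alpha, z)$. I would then define
$$\tilde{L}(z) \;:=\; \lim_{\alpha \in J} F_\alpha(z) \;\times_{\;\lim_{\alpha \in J} \colaxmatch(F_\alpha, z)\;}\; \colaxmatch(L, z),$$
taken in the $1$-category $\M(X(A), X(B))$. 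By construction, $\tilde{L}(z)$ comes equipped with a compatible family of projections $\tilde{L}(z) \to F_\alpha(z)$ together with a colax-matching map $\tilde{L}(z) \to \colaxmatch(L, z) = \colaxmatch(\tilde{L}, z)$, which encodes all of the colaxity data at $z$.

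It remains to endow $\tilde{L}$ with its action on $2$-morphisms touching $z$ and to verify functoriality, coherence, and the universal property. The action on inverse and direct $2$-morphisms is forced by the universal property of the defining pullback together with the corresponding actions of the $F_\alpha$; for a direct $2$-morphism $s \to z$ with $\degb(s) \leq m$, the required map $L(s) \to \tilde{L}(z)$ lifts the canonical $L(s) \to \lim_\alpha F_\alpha(z)$ to the pullback, and the compatibility needed for this lift is exactly supplied by the canonical latching-to-matching map $i_z \colon \colaxlatch(L,z) \to \colaxmatch(L,z)$ of Lemma~\ref{latch-match}, together with its naturality in the colax diagram. For the universal property, a test cone $(G \to F_\alpha)_\alpha$ in $\Colax(\C^{\leq m+1}, \M)_n$ restricts uniquely to a map $\tau_m G \to L$, and at degree $m+1$ the pullback's universal property produces the unique component $G(z) \to \tilde{L}(z)$ from the cone $G(z) \to F_\alpha(z)$ and the composite $G(z) \to \colaxmatch(G, z) \to \colaxmatch(L, z)$.

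The main obstacle will be the coherence verification: checking in detail that the constructed $\tilde{L}$ satisfies all the naturality and coherence axioms of a normal colax functor, most notably that the action on direct $2$-morphisms into $z$ is coherent with the colaxity. This is precisely where the direct-divisibility hypothesis, via Lemma~\ref{latch-match}, is indispensable: without the canonical map $i_z$, the two families of data glued together by the pullback would not be visibly compatible, and the candidate $\tilde{L}(z)$ would fail to carry a coherent colaxity structure. Once the coherence is in place, uniqueness of the extension follows from the pullback's universal property, and $\tilde{L}$ automatically realizes the limit of $D$, so that $\tau_m$ indeed creates limits.
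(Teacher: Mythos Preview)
Your proposal is correct and follows essentially the same strategy as the paper: extend the limit $L$ from degree $\leq m$ to each $1$-morphism $z$ of degree $m+1$ by a pullback construction involving the pointwise limit $\lim_\alpha F_\alpha(z)$ and the colax-matching object $\colaxmatch(L,z)$, invoking Lemma~\ref{latch-match} (hence direct-divisibility) to supply the latching-to-matching map and the action of direct $2$-morphisms, and leaving the coherence checks to the reader.

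There is one organizational difference worth noting. The paper defines, for each $i\in\J$, the pullback
\[
Q_i(z)\;=\;\colaxmatch(L,z)\times_{\colaxmatch(F_i,z)} \Xa_\infty z
\]
and sets $\tilde{L}(z)=\lim_{\J} Q_i(z)$, whereas you form a single pullback over $\lim_\alpha \colaxmatch(F_\alpha,z)$. For connected $\J$ these agree on the nose (limits commute with limits, and the constant corners contribute trivially), and in general your formulation is the one that directly represents the universal property you describe: a cone from $G$ is precisely a map $G(z)\to\lim_\alpha F_\alpha(z)$ together with the induced $G(z)\to\colaxmatch(L,z)$, agreeing over $\lim_\alpha\colaxmatch(F_\alpha,z)$. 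So your version is arguably the cleaner packaging of the same idea; the paper's iterated-limit description unwinds to yours once one traces the cone condition.
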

\subsubsection{Proof of Lemma \ref{creation-limit}} Let $\Xa: \J \to  \Colax(\C^{\leq m+1}, \M)_n$ be a diagram such that the composite $$\tau_m\Xa: \J \to  \Colax(\C^{\leq m}, \M)_n$$ has a limit $\Ea$. We construct below a diagram $\tld{\Ea} \in \Colax(\C^{\leq m+1}, \M)_n$ such that $\tau_m \tld{\Ea}= \Ea$ and $\tld{\Ea}$ is the limit of $\Xa$. To do so, we need to define $\tld{\Ea}(z)$ for every $1$-morphism of degree $m+1$ together with the colaxity maps out of $\tld{\Ea}(z)$.\\
By the results of the previous section we know that:
\begin{itemize}[label=$-$]
\item We have a canonical map $ i_z:\colaxlatch(\Ea,z) \to \colaxmatch(\Ea,z)$ (\textbf{this is where we need the direct-divisibility});
\item For every $i \in \Ob(\J)$  we have a factorization of the canonical map $i_z$:
$$ \colaxlatch(\Xa_i,z) \to  \Xa_i z \to \colaxmatch(\Xa_i,z) $$
\item The canonical projection $p_i:\Ea \to  \tau_m \Xa_i$ induces  a commutative square:
\[
\xy
(0,20)*+{\colaxmatch(\Ea,z)}="A";
(40,20)*+{\colaxmatch(\Xa_i,z)}="B";
(0,0)*+{\colaxlatch(\Ea,z)}="C";
(40,0)*+{\colaxlatch(\Xa_i,z)}="D";
{\ar@{->}^-{p_i}"A";"B"};
{\ar@{->}_-{i_z}"C";"A"};
{\ar@{->}_-{i_z}"D";"B"};
{\ar@{->}^-{p_i}"C";"D"};
\endxy
\] 

Note that there is an abuse of notation with the maps $p_i$ since they are not really component of $p_i$ but are induced by $p_i$.
\end{itemize}
Introduce $\Xa_{\infty} z= \lim_{\J} \Xa_i z$ and let $\pi_i: \Xa_{\infty} z \to \Xa_i z$ be the canonical map. For each $i \in \Ob(\J)$ we have a canonical map 
$\colaxlatch(\Ea,z) \to \Xa_i z$ given by the composite:
$$\colaxlatch(\Ea,z) \to  \colaxlatch(\Xa_i,z) \to  \Xa_i z $$
and it's not hard to see that these maps form a compatible diagram; thus there is a unique map $$\colaxlatch(\Ea,z) \to \Xa_{\infty} z $$
that makes everything compatible.\ \\

Let $Q_i(z)$ be the limit-object forming the pullback diagram:
\[
\xy
(0,20)*+{Q_i(z)}="A";
(40,20)*+{\Xa_{\infty} z}="B";
(0,0)*+{\colaxmatch(\Ea,z)}="C";
(40,0)*+{\colaxmatch(\Xa_i,z)}="D";
{\ar@{.>}^-{}"A";"B"};
{\ar@{.>}_-{}"A";"C"};
{\ar@{->}_-{}"B";"D"};
{\ar@{->}^-{p_i}"C";"D"};
\endxy
\]  

So basically we would write $Q_i(z)$ as a fiber product $ \colaxmatch(\Ea,z) \times_{\colaxmatch(\Xa_i,z)} \Xa_{\infty} z $. Here the map $\Xa_{\infty} z \to \colaxmatch(\Xa_i,z)$ is obviously the composite of $\pi_i: \Xa_{\infty} z \to \Xa_i z$ and the canonical map $\Xa_i z \to \colaxmatch(\Xa_i,z)$.\ \\

We leave the reader to check that we have a functor $Q(z): \J \to \M$ that takes $i$ to $Q_i(z)$ and we set:
$$ \tld{\Ea}z:= \lim_{\J}Q(z).$$ 

We have a canonical factorization of $i_{z}$ :
$$ i_z= \colaxlatch(\Ea,z)\to \tld{\Ea}z  \to \colaxmatch(\Ea,z) $$ 
which means in particular that $\tld{\Ea}z$ is equipped with a coherent family of colaxity maps .\ \\

Proceeding as previously for all $z$ of degree $m+1$ we get a colax diagram $\tld{\Ea}: \C^{\leq m+1} \to \M$. The reader can check that it satisfies the universal property of the limit. $\qed$ 
\ \\
\begin{cor}\label{cor-limits}
Let $\M$ be a locally complete $2$-category and $\C$ be a locally Reedy $2$-category which is simple and direct divisible. Assume furthermore that the degree function $\degb: \C \to \lambda$ has a minimal value $m_0$ for non identity $1$-morphisms. \\

Then the category $\Colax[\C, \M]_n$ has all small limits. 
\end{cor}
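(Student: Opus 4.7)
The plan is to build the limit by transfinite induction along the degree filtration and then assemble via the tower of truncations. Because $\C$ is simple the degree is additive, so all coherence data of a normal colax functor at a $1$-morphism of degree $m+1$ only involves its values at degrees $\leq m+1$; consequently $\Colax[\C,\M]_n$ is canonically the inverse limit, as a $1$-category, of the tower
\[
\cdots \xrightarrow{\tau_{m+1}} \Colax[\C^{\leq m+1},\M]_n \xrightarrow{\tau_m} \Colax[\C^{\leq m},\M]_n \xrightarrow{\tau_{m-1}} \cdots .
\]
It therefore suffices to show that each $\Colax[\C^{\leq m},\M]_n$ has small limits and that each $\tau_m$ preserves them.

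For the base step I would handle degrees $< m_0$ directly: only identity $1$-morphisms exist there, normality pins $\F(\Id)=\Id$, and the category becomes essentially discrete on object-assignments, where limits are trivial. At $m=m_0$, the minimality of $m_0$ together with the additivity of $\degb$ forbids any decomposition $f\otimes g$ of a degree-$m_0$ arrow into two non-identities, so every relevant colaxity map involves an identity factor and is invertible by normality. The limit of a diagram $\Xa:\J\to\Colax[\C^{\leq m_0},\M]_n$ can then be computed at each $z\in\C(A,B)$ of degree $m_0$ as $\lim_\J \Xa_i(z)$ in the complete hom-category $\M(\F A,\F B)$.

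The inductive step is where Lemma \ref{creation-limit} does all the work: assuming that $\Colax[\C^{\leq m},\M]_n$ has small limits, the fact that $\tau_m$ creates limits yields small limits in $\Colax[\C^{\leq m+1},\M]_n$ preserved by $\tau_m$. At a limit ordinal $\mu\leq\lambda$, the category $\Colax[\C^{\leq\mu},\M]_n$ is again the inverse limit of the strictly lower levels (by simpleness and the finitary nature of the coherence), so limits pass through level-wise. Transfinite induction then yields the statement at every $m$.

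The main obstacle I expect is the compatibility of the level-wise limits with the new colaxity structure at each increment; but this is precisely what Lemma \ref{latch-match} and the construction in the proof of Lemma \ref{creation-limit} handle via direct-divisibility, by forming $\tld{\Ea}z$ as $\lim_\J Q_i(z)$ with $Q_i(z)=\colaxmatch(\Ea,z)\times_{\colaxmatch(\Xa_i,z)}\Xa_\infty z$. Assembling these level-wise limits along the tower produces an object $\tld{\Ea}\in\Colax[\C,\M]_n$ whose truncation at each level equals $\lim_\J \tau^{(m)}\Xa$, and whose universal property follows from the pointwise universal properties combined with that of the inverse limit of categories.
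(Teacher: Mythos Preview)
Your proposal is correct and follows essentially the same strategy as the paper's sketch: establish the base case at degree $m_0$ by observing that minimality plus additivity of $\degb$ forces all colaxity maps there to be the trivial ones (so limits are computed level-wise in the complete hom-categories), and then climb by repeatedly invoking Lemma~\ref{creation-limit}. Your framing via the inverse-limit tower of truncations, together with the explicit treatment of limit ordinals, is more careful than the paper's sketch (which simply says ``applying inductively''), but the underlying argument is the same.
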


\begin{proof}[Sketch of proof]

It's not hard to observe that in the almost-$2$-category $\C^{\leq m_0}$, the only $1$-morphisms of degree $\leq m_0$ are identities, which are of  degree $0$; and the one of degree $m_0$. This is a consequence of being a minimal value. In addition to that, there are no nontrivial $2$-morphisms between $1$-morphisms of degree $m_0$; indeed, the factorization axiom for Reedy $1$-categories $\C(A,B)$ will contradict the minimality of $m_0$. 

Furthermore since we assumed that the composition in $\C$ adds the degree i.e, $\degb(x \otimes y)= \degb(x)+ \degb(y)$, it's easy to see that for $z$ such that $\degb(z)=m_0$, the only pairs $(x,y)$ such that $x \otimes y= z$ are: $(\Id,z)$ and $(z,\Id)$ (as $m_0$ is minimal).\ \\

If we put these observations together, we see that objects of the category $\Colax[\C^{\leq m_0}, \M]_n $ have no \textbf{pure colaxity maps} i.e, the only colaxity maps are the isomorphisms $\F(z) \to \Id \otimes \F(z)$ and $\F(z) \to \F(z) \otimes \Id$.  Consequently any object $\F \in \Colax[\C^{\leq m_0}, \M]_n $ is  determined by
 the \ family of functors 
 $$\{ \F_{AB}: \C^{\leq m_0}(A,B) \to  \M(\F A, \F B) \}_{(A,B) \in \Ob(\C)^2}.$$ 
 
Therefore limits in $\Colax[\C^{\leq m_0}, \M]_n $ are computed level-wise\footnote{We can say, in a fancy way, that $\Colax[\C^{\leq m_0}, \M]_n$ is equivalent to the category of these family of diagrams.}.
\ \\ 

Now since $\M$ is locally is complete we deduce that  $\Colax[\C^{\leq m_0}, \M]_n$ is complete. Applying inductively Proposition \ref{creation-limit} we see that $\Colax[\C, \M]_n$ is complete as well.
\end{proof}
\section{Lifting factorization systems}
The following is the analogue of what we did for lax diagrams in \cite[Section 6.1.2]{COSEC1}. All of the following considerations is a natural generalization of what is known for classical Reedy diagrams (see \cite{Hov-model}, \cite{Hirsch-model-loc} for example).\ \\

Let $\M$ be a $2$-category which is locally complete and cocomplete and such that each $\M(U,V)$ has a factorization system. For simplicity we will reduce our study to the case where $\M$ is a monoidal category having a factorization system $(L,R)$.  
\begin{nota}
Let $\C$ be as previously. We denote by:
\begin{itemize}[label=$-$]
\item $\Rc=$ the class of morphisms $\alpha: \Fa \to \Ga$ in $\Colax[\C, \M]_n$ such that for every $1$-morphism $z$, the map 
$$g_z: \Fa z \cup_{\colaxlatch(\Fa,z)} \colaxlatch(\Ga,z) \to \Ga z \in R ;$$
\item $\le=$ the class of morphisms $\alpha: \Fa \to \Ga$ such that for all $z$ the map $$\alpha_z: \Fa z \to  \Ga_z \times_{\colaxmatch(\Ga,z)} \colaxmatch(\F,z) \in L.$$
\item Similarly for each $m \in \lambda$ there are two classes $\le_m$ and $\Rc_m$ in $\Colax(\Ca^{\leq m},\M)$.   
\end{itemize} 
\end{nota}
By the same arguments as in the classical case, and as in \cite[6.1.2]{COSEC1} for lax diagrams, one can  prove the following:

\begin{prop}
\begin{enumerate}
\item Let $\alpha: \Fa \to \Ga$ be an object $\Colax[\C^{\leq m+1}, \M]_n $  such that $\tau_m\alpha$ has a factorization of type $(\le_m,\Rc_m)$:
$$\tau_m\Fa \xrightarrow{i} \K \xrightarrow{p} \tau_m\Ga.$$

Then there is a factorization of $\alpha$ of type $(\le_{m+1},\Rc_{m+1})$ in $\Colax[\C^{\leq m+1}, \M]_n $.
\item Let $\alpha: \Fa \to \Ga$ be in $\le_{m+1}$ (resp. $\Rc_{m+1}$). If $\tau_m \alpha$ has the LLP (resp. RLP) with respect to all maps in $\Rc_m$ (resp. $\le_m$) then $\alpha$ has the LLP (resp. RLP)  with respect to all maps in $\le_{m+1}$ (resp. $\Rc_{m+1}$).
\end{enumerate}
\end{prop}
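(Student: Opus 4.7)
The plan is to follow the classical Reedy-style argument, carried out one $1$-morphism at a time in degree $m+1$, with the crucial addition that we now invoke Lemma \ref{latch-match} to produce the canonical arrow $\colaxlatch\to\colaxmatch$ in the colax setting; this is the essential place where direct-divisibility of $\C$ enters.

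For assertion (1), I would extend the partial factorization $\tau_m\Fa\xrightarrow{i}\K\xrightarrow{p}\tau_m\Ga$ to the degree $m+1$ stratum by fixing a $1$-morphism $z$ with $\degb(z)=m+1$. Since $\K$ is already defined on $\C^{\leq m}$, both $\colaxlatch(\K,z)$ and $\colaxmatch(\K,z)$ are available, and Lemma \ref{latch-match} furnishes a canonical map between them. The morphisms $i$ and $p$ induce compatible arrows $\colaxlatch(\Fa,z)\to\colaxlatch(\K,z)$ and $\colaxmatch(\K,z)\to\colaxmatch(\Ga,z)$. I then form the pushout
$$P_z := \Fa(z)\cup_{\colaxlatch(\Fa,z)}\colaxlatch(\K,z)$$
and the pullback
$$M_z := \Ga(z)\times_{\colaxmatch(\Ga,z)}\colaxmatch(\K,z).$$
A natural arrow $P_z\to M_z$ falls out of the latching--matching data, and I apply the factorization system $(L,R)$ in $\M$ to obtain
$$P_z\xrightarrow{\ell_z\in L}\K(z)\xrightarrow{r_z\in R} M_z,$$
declaring $\K(z)$ to be this middle object. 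The colaxity maps at $z$ are extracted by composing $r_z$ with the canonical projections out of $\colaxmatch(\K,z)$, and by construction the induced $\Fa(z)\to\K(z)$ belongs to $\le_{m+1}$ while $\K(z)\to\Ga(z)$ belongs to $\Rc_{m+1}$.

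For assertion (2), I would argue similarly. Assume $\tau_m\alpha$ has the LLP against $\Rc_m$ (the RLP case is strictly dual). Given a lifting problem in $\Colax[\C^{\leq m+1},\M]_n$ against a map in $\Rc_{m+1}$, restrict to $\C^{\leq m}$ and solve by hypothesis. At each $z$ of degree $m+1$ the remaining data give a commutative square in $\M$ whose left edge lies in $L$ (since $\alpha\in\le_{m+1}$) and whose right edge lies in $R$ (since the target map is in $\Rc_{m+1}$), so the lifting property of $(L,R)$ yields a lift at $z$. The universal properties of the latching and matching objects then ensure that these pointwise lifts glue into a genuine morphism of normal colax diagrams.

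The hard part, and the only point where this departs from formal quotation of the classical Reedy proof, is checking that the abstractly defined $\K(z)$ really carries a coherent family of normal colaxity maps making $\K$ into an object of $\Colax[\C^{\leq m+1},\M]_n$, and that the extended factorization is functorial in $\alpha$. Both reduce to the universal properties of the pushouts, pullbacks and of $(L,R)$, but direct-divisibility, via Lemma \ref{latch-match}, is exactly what guarantees that the universal map $P_z\to M_z$ exists and is natural; once that is in hand the coherence bookkeeping reduces to the coherence already established on $\C^{\leq m}$.
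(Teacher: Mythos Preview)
Your proposal is correct and is precisely the argument the paper has in mind: the paper's own proof is simply ``Left to the reader,'' with the preceding text pointing to the classical Reedy argument and its lax analogue in \cite[6.1.2]{COSEC1}. You have supplied exactly that argument---factor the induced map from the relative latching pushout to the relative matching pullback at each $z$ of degree $m+1$---and you correctly isolate Lemma~\ref{latch-match} (hence direct-divisibility) as the one new ingredient needed to produce the map $P_z\to M_z$ in the colax setting.
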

\begin{proof}
Left to the reader.
\end{proof}

\begin{cor}\label{cor-factorize}
Under the above hypothesis, the pair $(\Rc, \le)$ is a factorization system on the category  $\Colax[\C, \M]_n $.
\end{cor}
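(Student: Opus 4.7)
The plan is to prove this by transfinite induction along the degree filtration $\C^{\leq m}$, assembling both the factorization and the orthogonality of $(\Rc,\le)$ on $\Colax[\C,\M]_n$ from compatible data at each truncation $\Colax[\C^{\leq m},\M]_n$.

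For the base case at the minimal degree $m_0$: as observed in the proof of Corollary~\ref{cor-limits}, an object of $\Colax[\C^{\leq m_0},\M]_n$ has no pure colaxity data and is therefore determined by the family of ordinary functors $\F_{AB}: \C^{\leq m_0}(A,B) \to \M(\F A, \F B)$. Each $\C(A,B)^{\leq m_0}$ is a classical Reedy $1$-category, so the classical Reedy factorization theorem for the factorization system $(L,R)$ on $\M$ produces a factorization of type $(\le_{m_0}, \Rc_{m_0})$ together with the corresponding lifting property between the two classes.

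For the inductive step at successor ordinals, given a factorization of $\tau_m\alpha$ of type $(\le_m, \Rc_m)$, part~(1) of the preceding proposition extends it to a factorization of $\alpha$ of type $(\le_{m+1}, \Rc_{m+1})$ in $\Colax[\C^{\leq m+1},\M]_n$, and part~(2) propagates the lifting property from level $m$ to level $m+1$. For limit ordinals $\mu \leq \lambda$, one assembles the tower of factorizations already produced for $m < \mu$ by passing to the inverse limit in $\Colax[\C^{\leq \mu},\M]_n$; such limits exist by Corollary~\ref{cor-limits}, and because the defining conditions for $\Rc$ and $\le$ are local at each $1$-morphism $z$, the assembled factorization lies in $\le_\mu \circ \Rc_\mu$ and retains the lifting property.

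Finally, since a normal colax functor on $\C$ corresponds to a compatible system of normal colax functors on the truncations $\C^{\leq m}$ indexed by $m \in \lambda$, the factorization of $\alpha$ and the orthogonality between $\le$ and $\Rc$ follow from assembling the data at each level; both conditions are tested one $1$-morphism $z$ at a time, and each such $z$ has finite degree, so no further work is needed at the top. The main obstacle is the coherence check at limit ordinals, where one must verify that the tower of factorizations obtained from the $\tau_m$ really does produce an element of $\Colax[\C^{\leq \mu},\M]_n$ in the correct class; this rests squarely on Lemma~\ref{creation-limit} together with the fact that $i_z:\colaxlatch(\F,z)\to\colaxmatch(\F,z)$ depends only on data of degree strictly less than $\degb(z)$, so extending over $z$ commutes with inverse limits. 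Once this is in place, everything else is the standard Reedy bookkeeping, which we leave to the reader.
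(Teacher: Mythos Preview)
Your proof is correct and takes the same inductive approach as the paper: establish $(\le_{m_0},\Rc_{m_0})$ as a factorization system at the minimal degree (where the colax data degenerates to level-wise data), then climb the filtration using the preceding proposition. The paper's own proof is two sentences and leaves the limit-ordinal step and final assembly implicit; your explicit treatment of these points is a welcome elaboration rather than a different route.
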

\begin{proof}
The pair $(\le_{m_0},\Rc_{m_0})$ is factorization system on  $\Colax[\C^{\leq m_0}, \M]_n$. Apply inductively the previous corollary. 
\end{proof}
\section{The Reedy model structure}
Let $\M$ be a monoidal model category or a $2$-category which is locally a model category.\ \\

Say that a morphism $\sigma: \F \to \Ga$ in $\Colax[\C, \M]_n$ is:
\begin{itemize}[label=$-$]
\item a \textbf{weak equivalence} if for every $1$-morphism $z$, the component $\sigma_z: \F z \to \Ga z$ is a weak equivalence in $\M$;
\item a \textbf{Reedy cofibration} if for every $1$-morphism $z$ the following map is a cofibration in $\M$:
$$\Fa z \cup_{\colaxlatch(\Fa,z)} \colaxlatch(\Ga,z) \to \Ga z $$
\item a \textbf{Reedy fibration} if for every $1$-morphism $z$ the map:
$$\Fa z \to  \Ga_z \times_{\colaxmatch(\Ga,z)} \colaxmatch(\F,z)$$
is a fibration in $\M$. 
\end{itemize} 

The main result here is that:
\begin{thm}\label{model-reedy-diag}
Let $\C$ and $\M$ be as above. Then the following hold.
\begin{enumerate}
\item The three classes of weak equivalences, Reedy cofibrations and Reedy fibrations determine a model structure on $\Colax[\C, \M]_n$.
\item If $\C$ is a classical Reedy $1$-category viewed as a $2$-category with two objects, and $\M$ is a model category also viewed as $2$-category which is is locally a model category; then the model structure on  $\Colax[\C, \M]_n$ coincide with the classical model structure for the diagram category $\Hom(\C,\M)$.
\end{enumerate}
\end{thm}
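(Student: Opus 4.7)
The plan is to verify the model-category axioms by assembling the structural results already established in the paper. Completeness and cocompleteness of $\Colax[\C,\M]_n$ are given by Corollaries \ref{cor-limits} and \ref{cor-colimit} respectively; the two-of-three and retract axioms for weak equivalences follow immediately from their level-wise definition together with the corresponding axioms in $\M$. What remains is to produce the two functorial factorizations and check the lifting axiom.

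For the factorizations, I would apply Corollary \ref{cor-factorize} twice: once to the functorial factorization system $(L,R)=(\tx{cof},\tx{triv-fib})$ of $\M$, and once to $(L,R)=(\tx{triv-cof},\tx{fib})$. This produces two factorization systems on $\Colax[\C,\M]_n$, whose left and right classes are specified by demanding the relative colax-latching map $g_z\colon \F z \cup_{\colaxlatch(\F,z)}\colaxlatch(\mathcal{G},z)\to\mathcal{G} z$, respectively the relative colax-matching map $\alpha_z\colon \F z \to \mathcal{G} z \times_{\colaxmatch(\mathcal{G},z)}\colaxmatch(\F,z)$, to lie in the appropriate class of $\M$. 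By inspection, the left class of the first factorization system is the Reedy cofibrations, and the right class of the second is the Reedy fibrations, so half the work of MC5 is immediate.

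The main obstacle is the usual identification: one must show that the right class of the first system consists precisely of Reedy fibrations that are level-wise weak equivalences, and dually that the left class of the second system consists precisely of Reedy cofibrations that are level-wise weak equivalences. I would proceed by induction on the degree $m$ of $z$, using the explicit recursive description of $\tld{\Ea}$ built in the proof of Lemma \ref{creation-limit}: at degree $m+1$ the colax-latching and colax-matching objects are assembled from data at degree $\leq m$ via colimits and pullbacks, so the standard pushout-pullback stability of (trivial) (co)fibrations in $\M$ transports the inductive hypothesis to the next stage. The direct-divisibility hypothesis on $\C$ is crucial throughout, since it is exactly what supplies the canonical map $i_z\colon\colaxlatch(\F,z)\to\colaxmatch(\F,z)$ (Lemma \ref{latch-match}) at every stage of the induction. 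Once these identifications are in place, the lifting axiom MC4 follows formally from part $(2)$ of the proposition preceding Corollary \ref{cor-factorize}, applied inductively through the tower of truncations $\tau_m$.

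For part $(2)$, I would invoke the third item of the remark following the definition of colax-latching/matching objects, which observes that when $\C=\Ba_{0\to 1}$ for a classical Reedy $1$-category $\Ba$, a normal colax functor $\C\to\M$ is the same datum as an ordinary functor $\Ba\to\ul{M}$, and the colax-latching and colax-matching objects reduce to the classical latching and matching objects. Consequently the three distinguished classes of morphisms defined above become the classical Reedy weak equivalences, cofibrations and fibrations in $\Hom(\Ba,\ul{M})$, and the model structure of part $(1)$ coincides with the classical Reedy model structure.
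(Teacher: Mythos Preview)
Your proposal follows essentially the same route as the paper's proof: it cites Corollaries \ref{cor-colimit} and \ref{cor-limits} for (co)completeness, invokes Corollary \ref{cor-factorize} twice with the two weak factorization systems of $\M$, and handles the lifting axiom via part $(2)$ of the preceding proposition applied inductively through the truncations. The paper's own proof is in fact terser than yours---it does not spell out the identification of the trivial classes (your ``main obstacle'') nor the inductive lifting argument, treating these as implicit in the factorization-system machinery---so your proposal is a more careful version of the same argument rather than a different one.
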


\begin{proof}
The category $\Colax[\C, \M]_n$ is complete and cocomplete by Corollary \ref{cor-limits} and Corollary \ref{cor-colimit}. The three classes of maps are clearly closed under compositions and retracts. The class of weak equivalences satisfies the $3$-for-$2$ property.
In $\M$ we have two factorization systems $(\cof \cap \we; \fib)$ and $(\cof; \fib \cap \we)$;  each of them induces a factorization system on $\Colax[\C, \M]_n$ by Corollary \ref{cor-factorize}. This proves Assertion $(1)$. 
Assertion $(2)$ is elementary and is left to the reader.
\end{proof}
\subsection{Application: A model structure for unital Segal $\M$-precategories with fixed objects}
Let $X$ be a set and $\Pcal\C(X,\M)$ be the category $\Colax[\px, \M]_n$. The objects of  $\Pcal\C(X,\M)$ will be called \textbf{unital Segal precategories}. \ \\

If we apply the previous theorem for $\C=\px$ we get: 
\begin{thm}\label{model-for-unital}
For a set $X$ and $\M$ as above, the following hold.
\begin{enumerate}
\item The three classes of weak equivalences, Reedy cofibrations and Reedy fibrations determine a model structure on $\Colax[\px, \M]_n$.
\item If $\M=(\ul{M},\times , I)$ is monoidal for the cartesian product then the Reedy model structure on $\Colax[\px, \M]_n$ coincide with the Reedy model structure for unital diagrams in $\Hom(\Delta_X^{op}, \ul{M})$.
\end{enumerate}
\end{thm}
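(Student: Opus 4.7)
The plan is to realize this theorem as a direct instance of the general Reedy model structure result (Theorem \ref{model-reedy-diag}) applied to the specific $\lr$-category $\px$, plus an application of Leinster's comparison (Proposition \ref{Leinster_delta}) for the second assertion.

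For Assertion (1), first I would verify that $\px$ satisfies the hypotheses required by Theorem \ref{model-reedy-diag}, namely that it is a locally Reedy $2$-category which is simple and direct-divisible, and that the degree function has a minimal value for non-identity $1$-morphisms (so that Corollary \ref{cor-limits} applies). The local Reedy and simple properties are already recorded in the list of examples following Definition \ref{direct-divisible}, where $\px$ is explicitly named; the degree function is inherited from the global degree on $\Delta^+$ sending $\n$ to $n$, so its minimal nonzero value on non-identity $1$-morphisms is $1$ (a chain $(x_0,x_1)$ of length two). For direct-divisibility, I would observe that the composition functor $\oarc(x,y) \times \oarc(y,z) \to \oarc(x,z)$ on direct $2$-morphisms is essentially the side-by-side concatenation of injective maps in $\Delta^+$, which is a Grothendieck fibration exactly by the same argument cited for $(\Delta^+,+,\0)$: given an injection $f : \n \to \m$ and a decomposition $\m = \p + \q$, one recovers the unique factorization by pulling back along each summand. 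Once these conditions are checked, Theorem \ref{model-reedy-diag}(1) directly yields Assertion (1).

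For Assertion (2), when $\M = (\ul{M}, \times, I)$ is cartesian, I invoke Proposition \ref{Leinster_delta}(2): there is an isomorphism of categories
\[
\Colax[\px, \M]_n \xrightarrow{\cong} \Pcal\C(X,\M),
\]
and via part (1) of that same proposition the right-hand side is identified with the full subcategory of unital diagrams in $\Hom(\Delta_X^{op}, \ul{M})$. Under this isomorphism the level-wise weak equivalences clearly correspond. For the Reedy cofibrations and fibrations, the point is to check that the colax-latching and colax-matching objects computed in $\Colax[\px,\M]_n$ (where the tensor is $\otimes$) agree with the classical latching and matching objects computed in $\Hom(\Delta_X^{op},\ul{M})$ (where the tensor is $\times$). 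Under the cartesian assumption the colaxity maps involved in the definition of the colax-matching object are identities, which is precisely why Leinster's translation works; hence the two universal constructions agree and so do the three distinguished classes of maps.

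The main obstacle is the compatibility verification in Assertion (2): one must trace through the explicit correspondence of Proposition \ref{Leinster_delta}, identifying the colax-latching (resp. colax-matching) object at a chain $(x_0,\dots,x_n) \in \px$ with the classical latching (resp. matching) object at $(x_0,\dots,x_n) \in \Delta_X$. The latching side is easier, since both are colimits over direct $2$-morphisms in the Reedy $1$-category $\px(x_0,x_n)^{\leq n-1}$, which correspond under $J$ to inner surjections in $\Omega(x_0,x_n)^{op}$, i.e.\ to objects of the classical latching category at $(x_0,\dots,x_n)$. The matching side is where the cartesian hypothesis is essential: the colax-matching object is a limit over inverse $2$-morphisms $(x_0,\dots,x_n) \to \otimes(y^{(1)},\dots,y^{(k)})$, and one writes the limit using the strict isomorphisms $\F[\otimes(y^{(i)})] \cong \prod_i \F(y^{(i)})$ to match exactly the non-inner coface contributions in the classical matching object of the unital presheaf on $\Delta_X$. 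The remaining verification, that the induced model structure agrees with the standard Reedy model structure restricted to unital diagrams, is then routine and left to the reader in the spirit of the rest of the paper.
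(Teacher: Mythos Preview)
Your approach matches the paper's: the paper presents Theorem~\ref{model-for-unital} as an immediate application of Theorem~\ref{model-reedy-diag} to $\C=\px$, with no further argument, so your verification of the hypotheses (simple $\lr$-structure, direct-divisibility, minimal degree $m_0=1$) and your appeal to Proposition~\ref{Leinster_delta} for Assertion~(2) are exactly what the paper has in mind.

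There is, however, a genuine slip in your justification of Assertion~(2). You write that ``under the cartesian assumption the colaxity maps involved in the definition of the colax-matching object are identities'' and later invoke ``strict isomorphisms $\F[\otimes(y^{(i)})] \cong \prod_i \F(y^{(i)})$''. Neither is true for a general object of $\Colax[\px,\M]_n$: the colaxity map $\F(s\otimes t)\to \F(s)\times\F(t)$ is precisely the Segal map, and requiring it to be an isomorphism is the Segal \emph{condition}, which is not assumed for precategories. The correct identification of the colax-matching object with the classical matching object of the corresponding unital presheaf on $\Delta_X^{op}$ does not proceed by collapsing colaxity maps; rather, one must compare the indexing categories themselves. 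The colax-matching category at $(x_0,\dots,x_n)$ is built from inverse $2$-morphisms and co-compositions in $\px$, and its diagram takes values in products $\prod_i \F(y^{(i)})$; the classical matching category consists of proper injections $(y_0,\dots,y_m)\hookrightarrow (x_0,\dots,x_n)$ in $\Delta_X$, with values $\F(y_0,\dots,y_m)$. Showing these limits agree requires a cofinality-type argument relating decompositions of the chain (plus inverse $2$-morphisms, governed by epimorphisms of $\Delta^+$) to subchains (governed by monomorphisms of $\Delta$), and here the cartesian structure enters only through the fact that $\otimes=\times$ so that products of the $\F(y^{(i)})$ are available and the projection maps used in the construction of $\Delta_\F$ (Proposition~\ref{Leinster_delta}) behave compatibly. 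The paper does not spell this out either, but your stated reason is not the right one.
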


\bibliographystyle{plain}
\bibliography{Bibliography_These}

\begin{thebibliography}{10}

\bibitem{SEC1}
H.~V. {Bacard}.
\newblock {Segal Enriched Categories I}.
\newblock http://arxiv.org/abs/1009.3673.

\bibitem{COSEC1}
H.~V. {Bacard}.
\newblock {Lax Diagrams and Enrichment}.
\newblock June 2012.
\newblock http://arxiv.org/abs/1206.3704.

\bibitem{Bergner_rigid}
Julia~E. Bergner.
\newblock Rigidification of algebras over multi-sorted theories.
\newblock {\em Algebr. Geom. Topol.}, 6:1925--1955, 2006.

\bibitem{Bergner_mon_seg}
Julia~E. Bergner.
\newblock Simplicial monoids and {S}egal categories.
\newblock In {\em Categories in algebra, geometry and mathematical physics},
  volume 431 of {\em Contemp. Math.}, pages 59--83. Amer. Math. Soc.,
  Providence, RI, 2007.

\bibitem{Bonnin}
F.~{Bonnin}.
\newblock {Les groupements}.
\newblock http://arxiv.org/abs/math/0404233.

\bibitem{Jardine-Goerss}
Paul~G. Goerss and John~F. Jardine.
\newblock {\em Simplicial homotopy theory}, volume 174 of {\em Progress in
  Mathematics}.
\newblock Birkh\"auser Verlag, Basel, 1999.

\bibitem{Hirsch-model-loc}
Philip~S. Hirschhorn.
\newblock {\em Model categories and their localizations}, volume~99 of {\em
  Mathematical Surveys and Monographs}.
\newblock American Mathematical Society, Providence, RI, 2003.

\bibitem{Hov-model}
Mark Hovey.
\newblock {\em Model categories}, volume~63 of {\em Mathematical Surveys and
  Monographs}.
\newblock American Mathematical Society, Providence, RI, 1999.

\bibitem{Kock-Toen}
Joachim Kock and Bertrand To{\"e}n.
\newblock Simplicial localization of monoidal structures, and a non-linear
  version of {D}eligne's conjecture.
\newblock {\em Compos. Math.}, 141(1):253--261, 2005.

\bibitem{Lack_icons}
Stephen Lack.
\newblock Icons.
\newblock {\em Appl. Categ. Structures}, 18(3):289--307, 2010.

\bibitem{Lei2}
T.~{Leinster}.
\newblock {Homotopy Algebras for Operads}.
\newblock http://arxiv.org/abs/math/0002180.

\bibitem{Lei3}
T.~{Leinster}.
\newblock {Up-to-Homotopy Monoids}.
\newblock http://arxiv.org/abs/math/9912084.

\bibitem{Lurie_HTT}
Jacob Lurie.
\newblock {\em Higher topos theory}, volume 170 of {\em Annals of Mathematics
  Studies}.
\newblock Princeton University Press, Princeton, NJ, 2009.

\bibitem{Mac}
Saunders Mac~Lane.
\newblock {\em Categories for the working mathematician}, volume~5 of {\em
  Graduate Texts in Mathematics}.
\newblock Springer-Verlag, New York, second edition, 1998.

\bibitem{Shoikhet_Deligne}
B.~{Shoikhet}.
\newblock {Differential graded categories and Deligne conjecture}.
\newblock {\em ArXiv e-prints}, March 2013.

\bibitem{Shulman_monoidal_fib}
M.~A. {Shulman}.
\newblock {Framed bicategories and monoidal fibrations}.
\newblock {\em ArXiv e-prints}, June 2007.

\bibitem{Simpson_HTHC}
Carlos Simpson.
\newblock {\em Homotopy theory of higher categories}, volume~19 of {\em New
  Mathematical Monographs}.
\newblock Cambridge University Press, Cambridge, 2012.

\bibitem{Toen_Tannaka}
Bertrand To{\"e}n.
\newblock Dualit{\'e} de {Tannaka} sup{\'e}rieure {I}: Structure
  mono{\"\i}dales.
\newblock {\em Unpublished manuscript. Available on the author's website}, June
  2000.

\end{thebibliography}
\end{document}